\documentclass[11pt,twocolumn]{article}
\usepackage[margin=0.85in]{geometry}
  \usepackage{xcolor}
\definecolor{refpurple}{RGB}{110,45,145}
\usepackage{float}
\usepackage[
  colorlinks=true,
  linkcolor=refpurple,
  citecolor=refpurple,
  urlcolor=refpurple
]{hyperref}

 \usepackage[utf8]{inputenc}
\usepackage[T1]{fontenc}
\usepackage{amsmath}
\usepackage{amsfonts}
\usepackage{amssymb}

\usepackage{amsthm}
\usepackage{mathtools}
\usepackage{graphicx}
\usepackage{booktabs}
\usepackage{array}
\usepackage{enumitem}
\usepackage{url}
\usepackage{xurl}
 \usepackage{stfloats}
\usepackage{xcolor}
\definecolor{azul}{rgb}{0.0, 0.53, 0.74}
\newcommand{\blueline}{\begin{center}\textcolor{azul}{\rule{150mm}{0.5mm}}\end{center}}
\allowdisplaybreaks

\theoremstyle{plain}
\makeatletter
\newcommand{\safenewtheorem}[3][theorem]{%
  \@ifundefined{#2}{\newtheorem{#2}[#1]{#3}}{}%
}
\@ifundefined{theorem}{\newtheorem{theorem}{Theorem}[section]}{}
\makeatother
\safenewtheorem{lemma}{Lemma}
\safenewtheorem{corollary}{Corollary}
\safenewtheorem{proposition}{Proposition}
\safenewtheorem{example}{Example}
\theoremstyle{definition}
\safenewtheorem{definition}{Definition}
\newtheorem{assumption}[theorem]{Assumption}
\theoremstyle{remark}
\safenewtheorem{remark}{Remark}

\newcommand{\R}{\mathbb{R}}
\newcommand{\norm}[1]{\left\lVert #1 \right\rVert}
\newcommand{\abs}[1]{\left\lvert #1 \right\rvert}

\newcommand{\dmatch}{d_{\mathrm{match}}}
\newcommand{\lmin}{\lambda_{\min}}

\newcommand{\Proj}{\Pi}
\newcommand{\Marg}{M_t}
\newcommand{\MargOver}{M_t^{\mathrm{ov}}}
\newcommand{\MargTau}{M_t^{\tau}}

\newcommand{\Sstar}{S_t^{\star}}

\newcommand{\Fov}{F_t^{\mathrm{ov}}}

\newcommand{\etaov}{\eta^{\star,\mathrm{ov}}_t}
\newcommand{\dstar}{d^{\star}_t}
\newcommand{\lstar}{\lambda^{\star}_t}
\newcommand{\dstarov}{\sigma^{\star,\mathrm{ov}}_t}
\DeclareMathOperator*{\argmax}{arg\,max}

\begin{document}
\title{Movement Stability of D-Optimal Sensor Placement\\ Under Projector Perturbations}

 \author{Isabella Yin$^{1}$ \and Laura P.\ Schaposnik$^{2,3,4}$}
 
\date{\today}

\makeatletter
\twocolumn[{%
  \begin{@twocolumnfalse}
  \maketitle
  \begin{center}\small
$^{1}$Tsinghua International School, Beijing, China\\[1pt]
$^{2}$ MSCS, LQuTE, AIRSHIP Lab, 
University of Illinois Chicago, Chicago, IL, USA\\[1pt]
$^{3}$NSF-Simons National Institute for Theory and Mathematics in
Biology, Chicago, IL, USA\\[1pt]
$^{4}$Mathematical Institute and Magdalen College,
University of Oxford, UK\\[4pt]
\texttt{bellayin08@gmail.com}\qquad
\texttt{schapos@uic.edu} (corresponding author)
\end{center}
  \blueline
  \begin{abstract}
  \noindent
In time-varying D-optimal subset selection, the number of replaced coordinates does not measure
physical displacement: when the optimal selection for a set of mobile sensors changes, a single
sensor may have to cross the entire graph. We therefore measure movement by a bottleneck matching distance on a fixed mobility graph, and define the distance margin at radius $P$ as the smallest old-objective loss among configurations farther than $P$ from the current maximizer. Our main result shows that if the projector drift is small and the distance margin at a chosen radius exceeds an explicit threshold, then every maximizer of the new objective lies within that radius, without tie-breaking rules or uniform conditioning assumptions over subsets. Moreover, we show that separation information of this kind is genuinely needed: a weighted-path construction shows that, at a fixed baseline eigengap, an arbitrarily small perturbation can move an optimal sensor from one end of the graph to the other, so no bound depending only on perturbation size and eigengap can vanish with the perturbation. We also give a finite certificate, conditions under which local and Voronoi restrictions retain a global optimum, and oversampled and regularized extensions, and we identify the learned dictionary criteria to which the results apply.
  \end{abstract}
  \medskip
  \noindent\textbf{Keywords:} Projector perturbation; D-optimal design; Subset selection; Graph signal sampling; \\Bottleneck matching. \\ \textbf{MSC (2020): 90C90, 62K05, 15A18}
  % your MSC codes
  \blueline
  \medskip
  \end{@twocolumnfalse}
}]
\makeatother

\section{Introduction}
\label{sec:intro}
  Where to place a few sensors so that they tell us as much as possible about a signal is an old question that has never really gone away. Classical optimal design answers it with the D-optimality criterion \cite{pukelsheim2006optimal}. By the late 2000s this had become a standard tool for sensor selection \cite{joshi2009sensor,krause2008near}, and graph signal processing has since given it a natural home on networks (e.g., see \cite{pesenson2008sampling,shuman2013emerging,chen2015discrete,anis2016efficient,ortega2018graph}). Most of this work, however, is static. In practice signal models drift and are re-estimated \cite{dilorenzo2018adaptive,nomura2024dynamic}, and while the effect of such perturbations on the underlying subspace has been understood since Davis and Kahan \cite{davis1970rotation}, much less is known about their effect on the sensors themselves. The question is not how far the subspace moves, but how far the hardware has to travel.

To see why this matters, imagine a fleet of buoys measuring sea-surface temperature along a coastline. The signal model is
updated, the D-optimal placement is recomputed, and the new answer differs from the old one by a
single sensor. On paper this is the smallest change there is. In the water, it may mean sending one
buoy from one end of the coast to the other.

This is not an artifact of messy data: indeed, it already happens in the simplest setting one can write
down. Put one sensor on a path of $n$ vertices to sample a smooth signal, with edge weights chosen
so that the two endpoints are equally good. An arbitrarily small change in those weights breaks the
tie, and the optimal sensor jumps $n-1$ hops (Theorem~\ref{thm:impossibility}). One coordinate was replaced,
yet the sensor crossed the entire graph. Counting replacements and measuring movement are
genuinely different things, and only the second is what a mobile sensor pays for.

For this reason, algorithms for dynamic placement often restrict attention to a local search region for each sensor. Nomura et al.\ update a learned dictionary, and restrict each sensor to a $P$-hop neighborhood intersected with a Voronoi cell determined by the current sensor positions \cite{nomura2024dynamic}. Their experiments on synthetic signals and sea surface temperature data show improved reconstruction error relative to certain static placement baselines. Such a local restriction is practical, but it raises the question that our paper aims to address: when is a local $P$-hop restriction guaranteed to retain a global optimum?

Answering this question requires separating four related ideas and statements:

\begin{enumerate}[label=(\alph*),leftmargin=*]

\item Set replacement: the number of replaced sensors, $\frac{1}{2}\abs{S\triangle S'}$, where $S,S'$
are the old and new sets of selected vertices and $\triangle$ denotes symmetric difference.

\item Physical movement: how far each sensor must travel, measured using a bottleneck matching distance.

\item Feasibility of a local search region or adjustment (whether a global optimum of the new objective belongs to the local region of the previous state).

\item Success of a particular algorithm: whether a greedy or distributed update finds that close optimum.

\end{enumerate}

Our paper reaches results on (b) and (c), but not (d). We make no claim about whether the independent per-sensor greedy updates in Algorithm 2 of Nomura et al.~\cite{nomura2024dynamic} solve the centralized D-optimal problem. Section~\ref{sec:bridge} gives conditions under which the local feasible region retains a global optimum; it does not establish that independent per-sensor updates recover that optimum. We discuss related work in Section~\ref{subsec:related}, and describe below our main findings:

\label{subsec:contributions}
\begin{enumerate}[label=\arabic*.,leftmargin=*]
\item {\it Movement bound (distance margin) under projector perturbation.}
We derive a stability theorem that bounds the movement of every new maximizer using projector drift, conditioning only at the current optimizer, and a distance margin, and
show that in random ensembles the probability of any optimizer change is controlled by the
lower tail of the second-best objective gap.

\item {\it Why a distance margin is needed.}
A weighted-path construction shows that no movement bound depending only on perturbation size and
eigengap can both hold uniformly over weighted graphs and vanish with the perturbation at fixed
eigengap.

\item {\it Computable certification. }
We show how to compute the distance margin exactly on finite graphs and test the movement constraint using perfect matchings.

\item {\it Local losslessness and extensions. }
We show when $P$-hop restrictions intersected with Voronoi regions preserve a global optimum,
and that without the distance margin they can lose it under arbitrarily small perturbations, and extend the stability result to oversampled and regularized settings, and identify which dictionary-based criteria the general theorem covers.
\end{enumerate}
Section~\ref{sec:setting} reviews related work and sets up the problem; Sections~\ref{sec:main}--\ref{sec:laplacian} establish the stability and impossibility results; Sections~\ref{sec:margin}--\ref{sec:extensions} present certification, local restrictions, and extensions. Section~\ref{sec:experiments} presents synthetic experiments on the certificate and the distance margin curve.

\section{Background and Setup}
\label{sec:setting}
In this section we first place the problem within the literatures it draws on, namely
graph signal sampling, D-optimal design, subspace perturbation, and dynamic sensor placement, and
identify what each leaves open for mobile sensors (Section~\ref{subsec:related}). We then fix the
setup used throughout the paper: the mobility graph on which movement is measured, the two
determinant objectives, the bottleneck matching distance, and the distance margin
(Section~\ref{subsec:setup}). Readers familiar with the literature may skip directly to the latter.

\subsection{Related work}
\label{subsec:related}
The problem studied here sits at the meeting point of four lines of work, each of which supplies
part of what we need and none of which, on its own, addresses how far mobile sensors must travel
when the signal model changes.

\begin{enumerate}[label=\arabic*.,leftmargin=*]
\item {\bf Graph signal sampling: }Sampling theory establishes which vertex subsets determine a band-limited graph signal in a low-frequency subspace \cite{anis2016efficient,chen2015discrete,marques2016sampling,pesenson2008sampling,tsitsvero2016signals} (see Shuman et al.~\cite{shuman2013emerging} and Ortega et al.~\cite{ortega2018graph} for surveys). Graphical designs provide a related quadrature viewpoint for low-frequency graph functions \cite{steinerberger2020generalized,babecki2021codes}. All of these concern a fixed signal model; none asks how an optimal set moves when the model changes.

\item {\bf{D-optimal design and subset selection:}} The D-optimal design problem in matrix sampling maximizes a log-determinant criterion, leading to the selection of informative subsets of rows, and is prevalent in problems of sensor selection \cite{joshi2009sensor,krause2008near}, maximum-volume submatrix selection \cite{civril2009selecting}, and randomized numerical linear algebra \cite{avron2013faster}. Combinatorial versions of these maximization problems are often solved using greedy and related approximation strategies. Instead of analyzing the approximation achieved by a particular algorithm, we provide theoretical guarantees on how far a maximizing configuration can move when its projector changes.

\item {\bf{Subspace and projector perturbation:}} The Davis--Kahan $\sin\Theta$ theorem \cite{davis1970rotation} and its further refinements \cite{stewart1990matrix,bhatia1997matrix,kato1995perturbation,yu2015useful} describe how an invariant subspace changes under symmetric perturbations. Classical subspace perturbation results control the change in the associated orthogonal projector. We use this control to obtain movement guarantees for determinant-maximizing configurations. Our projector-level formulation also applies to subspaces obtained from learned dictionaries.

\item {\bf{Dynamic sensor placement:}} Adaptive sampling strategies for real-time graph signal streams have been studied in \cite{dilorenzo2018adaptive}, and Nomura et al.~\cite{nomura2024dynamic} recently proposed a local-update strategy that restricts each sensor to a $P$-hop neighborhood intersected with its Voronoi cell. Such restrictions work well in practice, but nothing guarantees that the new global optimum lies inside the local region; Section~\ref{sec:bridge} gives conditions under which it does.  
The undersampled determinant objective studied here is also
one of the design-based placement criteria considered in
the TAPS framework \cite{lin2026taps}. The present work
studies the movement of its maximizers when the underlying
subspace changes.

\end{enumerate}

  Margin-based certificates also appear in \cite{gokavarapu2026finite}, where, under the
statistical assumptions of that work, a sufficiently large score margin guarantees that a
top-$m$ selection in a graph-centrality problem is unchanged under perturbation. Unlike in our problem, those objectives are coordinatewise separable, whereas the D-optimal objective is a nonseparable set function. Our margin is instead defined in terms of distance in the underlying mobility graph and is used to certify a movement radius rather than only invariance of the top-$m$ set.

\subsection{Setup}
\label{subsec:setup}
The framework has two layers, and it is worth keeping them apart from the start. The first is
geometric: a fixed graph on which the sensors move, which determines what any reconfiguration costs.
The second is the signal model: a low-dimensional subspace, encoded by an orthogonal projector,
which may change from one time step to the next. We fix the first in
Assumption~\ref{ass:movement}, then introduce the determinant objectives built from the second,
and finally combine the two through the bottleneck matching distance
(Definition~\ref{def:matching_distance}) and the distance margin (Definition~\ref{def:margin}), on
which all later results rest.
\begin{assumption}[Fixed mobility graph]
\label{ass:movement}
Let $G_{\mathrm{mv}}=(V,E_{\mathrm{mv}})$ be a fixed connected unweighted graph on the vertex set $V$ with $\abs{V}=n$, and let
$d(u,v):=\min_{\pi:u\leadsto v}\abs{\pi}$
denote its shortest-path distance, where $\pi$ ranges over paths in $G_{\mathrm{mv}}$ and $\abs{\pi}$ is the number of edges. All distances, $P$-hop neighborhoods, Voronoi cells, and matching distances in this paper refer to $G_{\mathrm{mv}}$.
\end{assumption}
The signal model, by contrast, may be a weighted graph, a spectral subspace, or a learned dictionary. In the graph-signal application of Sections~\ref{sec:laplacian} and~\ref{sec:bridge} we take $G_{\mathrm{mv}}$ to be the support graph of the weighted graph, and we assume that the support is fixed while the edge weights may change. The impossibility construction lies in this same setting, where only weights change.
\begin{remark}[What the movement metric encodes versus does not]
\label{rem:metric_scope}
The bottleneck matching distance of Definition~\ref{def:matching_distance} treats sensors as interchangeable (only asks whether the old and new positions can be matched with small displacement). It does not separate sensor types or account for trajectory collisions, travel times, or edge-capacity constraints. These limitations motivate trajectory-related extensions discussed in Section~\ref{sec:conclusion} for future work.
\end{remark}

We now turn to the signal model. Fix integers $K$ and $m$ with $1\le K<n$ and $1\le m\le n$.
For $s\in\{t,t+1\}$, let $\Proj_s\in\R^{n\times n}$ be an orthogonal projector of rank $K$, and choose $U_s\in\R^{n\times K}$ with orthonormal columns spanning its range, such that $\Proj_s=U_sU_s^\top$. Set $\rho_t:=\norm{\Proj_t-\Proj_{t+1}}_2$.

Throughout, $\norm{\cdot}_2$ is the Euclidean norm of a vector and the induced operator norm of a
matrix, $I_m$ is the $m\times m$ identity, $\lmin$ denotes the smallest eigenvalue of a
symmetric matrix, and $X\succeq Y$ ($X\succ Y$) means that $X-Y$ is positive semidefinite
(definite). A configuration is a subset $S\subseteq V$ with $\abs{S}=m$. After fixing an ordering
of $V$, let $R_S\in\R^{m\times n}$ consist of the rows of $I_n$ indexed by $S$, so that $R_Sx$
lists the entries of $x$ on $S$, and let $D_S=R_S^\top R_S$ be the associated binary diagonal
matrix. Write $U_{s,S}:=R_SU_s$ for the rows of $U_s$ indexed by $S$.

Undersampled case $m\le K$:
for $s\in\{t,t+1\}$ define
$C_s(S):=R_S\Proj_sR_S^\top\in\R^{m\times m},$ and $F_s(S):=\log\det C_s(S),$ with $F_s(S)=-\infty$ when $C_s(S)$ is singular. Maximizing $F_s$ selects rows of the subspace basis according to the D-optimal volume criterion \cite{pukelsheim2006optimal,chen2015discrete}. For $m<K$, this is a row-volume criterion; when $m=K$, the compressed-projector and information-matrix determinants agree.

Oversampled case $m\ge K$:
for $m>K$, the matrix $C_s(S)$ has rank at most $K<m$, so its determinant is zero and its log-determinant is $-\infty$. Thus, for $s\in\{t,t+1\}$ define instead $H_s(S):=U_s^\top D_SU_s\in\R^{K\times K},$ and $F_s^{\mathrm{ov}}(S):=\log\det H_s(S),$ again with value $-\infty$ when $H_s(S)$ is singular.

At $m=K$ when the two determinant objectives agree, the undersampled perturbation estimate will be sharper and we use it in that boundary case.

\begin{definition}[Maximizers]
\label{def:optimizers}
Write $\argmax F_t$ for the set of configurations maximizing $F_t$ over all $m$-subsets; it is nonempty because there is a finite possible family. A reference maximizer $\Sstar\in\argmax F_t$ is any fixed element of this set, relative to which all margins are defined. No tie-breaking is needed at time $t+1$; the results hold for every $S\in\argmax F_{t+1}$.
\end{definition}
\begin{lemma}[Nondegeneracy in undersampled case]
\label{lem:nondegenerate}
If $m\le K$ then some $m$-subset $S$ has $\det C_t(S)>0$; consequently $\dstar:=\det C_t(\Sstar)=e^{F_t(\Sstar)}>0$ for every reference maximizer.
\end{lemma}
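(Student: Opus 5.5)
The plan is to write the compressed projector as a Gram matrix and reduce nondegeneracy to the existence of $m$ linearly independent rows of $U_t$. Since $\Proj_t=U_tU_t^\top$, for any $m$-subset $S$ we have
\[
C_t(S)=R_SU_tU_t^\top R_S^\top=U_{t,S}U_{t,S}^\top,
\]
the Gram matrix of the $m$ rows of $U_t$ indexed by $S$. It is positive semidefinite, and $\det C_t(S)>0$ exactly when $U_{t,S}\in\R^{m\times K}$ has full row rank $m$, that is, when the rows of $U_t$ indexed by $S$ are linearly independent.

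Next we show that such an $S$ exists. The matrix $U_t\in\R^{n\times K}$ has orthonormal columns, so $\operatorname{rank}U_t=K$. Its row rank is therefore also $K$, and some $K$ of its rows are linearly independent. Because $m\le K$, any $m$ of those $K$ rows are still linearly independent. Taking $S$ to be the corresponding index set, which has $|S|=m$ and $S\subseteq V$, gives $\det C_t(S)>0$, so $F_t(S)>-\infty$.

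Finally, since $\Sstar$ maximizes $F_t$, we get $F_t(\Sstar)\ge F_t(S)>-\infty$. Hence $C_t(\Sstar)$ is nonsingular, and being positive semidefinite it is positive definite. Therefore $\dstar=\det C_t(\Sstar)=e^{F_t(\Sstar)}>0$, and this holds for every choice of reference maximizer.

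There is no real obstacle here. The only step needing care is the rank fact that row rank equals column rank, which produces $K$ independent rows; the rest follows from the Gram identity and the convention that $F_t=-\infty$ on singular configurations.
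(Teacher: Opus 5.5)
Your proposal is correct and takes essentially the same route as the paper: write $C_t(S)=U_{t,S}U_{t,S}^\top$, use $\operatorname{rank}U_t=K\ge m$ to choose $m$ linearly independent rows, and then use maximality of $\Sstar$ to get $F_t(\Sstar)\ge F_t(S)>-\infty$. You also state explicitly that a positive semidefinite matrix with nonzero determinant is positive definite, a step the paper leaves implicit.
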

\begin{proof}Since $U_t$ has rank $K\ge m$,   it contains $m$ linearly independent rows, indexed by some $S$. Then $U_{t,S}\in\R^{m\times K}$ has rank $m$ and $C_t(S)=U_{t,S}U_{t,S}^\top$ is positive definite. Any maximizer thus has objective value at least $F_t(S)>-\infty$
\end{proof} 

Thus, in the undersampled regime, the nondegeneracy used below is automatic for every rank-$K$ projector; no genericity assumption on the graph weights or projector entries is required.\smallskip

\begin{definition}[Bottleneck matching distance]
\label{def:matching_distance}
For $m$-sets $S=\{s_1,\dots,s_m\}$ and $S'=\{s_1',\dots,s_m'\}$, put
\begin{equation}
\begin{split}
    \dmatch(S,S')&:=\min_{\pi:\{1,\ldots,m\}\to\{1,\ldots,m\}\ \mathrm{bijective}}\\
    &\qquad\max_{1\le i\le m} d\bigl(s_i,s'_{\pi(i)}\bigr)
\end{split}
    \label{eq:dmatch_def}
\end{equation}
with $d$ as the distance of the fixed movement graph $G_{\mathrm{mv}}$. This is the smallest radius within which every old sensor can be matched bijectively to a new sensor. The value is independent of the ordering chosen for $S$ and $S'$.
\end{definition}
\begin{proposition}[Metric property]
\label{prop:matching_metric}
$\dmatch$ is a metric on the family of $m$-subsets of $V$.
\end{proposition}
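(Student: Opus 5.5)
We verify the metric axioms directly from Definition~\ref{def:matching_distance}, using that $d$ is a metric on $V$: it is the shortest-path distance of the connected graph $G_{\mathrm{mv}}$ under Assumption~\ref{ass:movement}, hence finite, symmetric, zero exactly on the diagonal, and subadditive. Throughout, write a bijection $\pi$ as a matching between $S$ and $S'$, and call $\max_i d(s_i,s'_{\pi(i)})$ its cost. The minimum in \eqref{eq:dmatch_def} is over a finite nonempty set of bijections, so it is attained and $\dmatch(S,S')$ is a nonnegative integer.

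\emph{Identity of indiscernibles.} If $S=S'$, the identity matching has cost $0$, so $\dmatch(S,S)=0$. Conversely, suppose $\dmatch(S,S')=0$. Then some bijection $\pi$ satisfies $d(s_i,s'_{\pi(i)})=0$ for every $i$, so $s_i=s'_{\pi(i)}$ for all $i$. This gives $S\subseteq S'$, and equal cardinalities force $S=S'$.

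\emph{Symmetry.} The bijection $\pi$ yields the inverse bijection $\pi^{-1}$ from $S'$ to $S$. By symmetry of $d$, the multiset of distances $\{d(s'_j,s_{\pi^{-1}(j)})\}_j$ equals $\{d(s_i,s'_{\pi(i)})\}_i$. So the two matchings have the same cost. Minimizing over $\pi$ is then equivalent to minimizing over $\pi^{-1}$, and hence $\dmatch(S,S')=\dmatch(S',S)$.

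\emph{Triangle inequality.} This is the only step with any content. Given $m$-sets $S,S',S''$, choose optimal bijections $\pi$ from $S$ to $S'$ and $\sigma$ from $S'$ to $S''$. Their composition $\sigma\circ\pi$ is a bijection from $S$ to $S''$. For each $i$, the triangle inequality for $d$ gives
\[
d\bigl(s_i,s''_{\sigma(\pi(i))}\bigr)\le d\bigl(s_i,s'_{\pi(i)}\bigr)+d\bigl(s'_{\pi(i)},s''_{\sigma(\pi(i))}\bigr)\le \dmatch(S,S')+\dmatch(S',S'').
\]
Taking the maximum over $i$ bounds the cost of $\sigma\circ\pi$ by $\dmatch(S,S')+\dmatch(S',S'')$. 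Since $\dmatch(S,S'')$ is the minimum cost over all bijections, it is at most the cost of $\sigma\circ\pi$, which proves the inequality.

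The only point needing care is the remark in Definition~\ref{def:matching_distance} that the value does not depend on how $S$ and $S'$ are indexed. Relabelling $S$ or $S'$ permutes the set of bijections by a fixed reindexing, so the minimum is unchanged. This justifies treating $\pi$ and $\sigma$ as matchings between sets rather than between indexed lists, so that composing them is well defined. No step presents a genuine obstacle.
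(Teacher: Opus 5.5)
Your proof is correct and follows the paper's argument essentially verbatim. Both proofs use attainment of the minimum over finitely many bijections, symmetry from the symmetry of $d$, identity of indiscernibles from $d(s_i,s'_{\pi(i)})=0$, and the triangle inequality by composing optimal bijections and applying the triangle inequality for $d$ pointwise. Your remark on independence from the indexing, which justifies composing the two bijections, makes explicit a point the paper leaves implicit.
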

\begin{proof}The minimum in \eqref{eq:dmatch_def} is attained because there are only finitely many bijections between two $m$-element sets. Nonnegativity and symmetry thus follow directly from the corresponding properties of $d$. If $\dmatch(S,S')=0$, then some bijection pairs each $s_i$ with a vertex at graph distance zero (hence with $s_i$ itself) so $S=S'$. Conversely, $\dmatch(S,S)=0$. For the triangle inequality, let $S=\{s_i\}$, $T=\{t_j\}$, and $R=\{r_k\}$, and choose bijections $\pi$ and $\sigma$ attaining $\dmatch(S,T)$ and $\dmatch(T,R)$, respectively. Composing gives us
\begin{eqnarray*}
  d\bigl(s_i,r_{\sigma(\pi(i))}\bigr) 
  &\le& d\bigl(s_i,t_{\pi(i)}\bigr)+d\bigl(t_{\pi(i)},r_{\sigma(\pi(i))}\bigr) \\
  &\le& \dmatch(S,T)+\dmatch(T,R)
\end{eqnarray*}
for every $i$. Taking the maximum over $i$ and then the minimum over bijections from $S$ to $R$ gives us the claim.
\end{proof}
\begin{definition}[Distance margin]
\label{def:margin}
Fix a reference maximizer $\Sstar\in\argmax F_t$. For an integer $P\ge0$ define
\begin{equation}
  \Marg(P):=\min_{\substack{\abs{S}=m\\ \dmatch(S,\Sstar)>P}}\bigl[F_t(\Sstar)-F_t(S)\bigr],
  \label{eq:margin_def}
\end{equation}
with convention $\Marg(P)=+\infty$ when the minimum is over an empty family. Equivalently, among all configurations that cannot be reached from $S^\star_t$ by moving each sensor at most $P$ hops, $M_t(P)$ measures how much worse the best one is under the old objective.
\end{definition}

The same empty-family convention applies to $\MargOver$ and $\MargTau$ introduced later, so the corresponding stability statements hold when their feasible families are empty.

The stability result depends on three main quantities. Projector drift measures the perturbation of the underlying low-dimensional subspace, the distance margin measures the gap between the optimal configuration and distant competitors, and perturbation budget measures the objective change caused by projector drift. A summary of the notation used throughout is given in Appendix~\ref{app:notation} in Tables~\ref{tab:notation}–\ref{tab:notation3}.

\section{Movement Stability Under Projector Perturbations}
\label{sec:main}
Throughout this section $m\le K$, and $\Proj_t,\Proj_{t+1}$ are arbitrary rank-$K$ orthogonal projectors. The argument does not require the projectors to come from a graph; the fixed mobility graph comes in only through the matching distance of Definition~\ref{def:matching_distance}. 
The proof has four steps. Lemma~\ref{lem:compression} transfers the projector drift to every
compressed matrix, and Lemma~\ref{lem:logdet} turns this into a bound on the change of the
objective at the reference maximizer, which is well conditioned. Optimality at time $t+1$ then
forces every new maximizer to have a large determinant and hence, by
Proposition~\ref{prop:auto}, to be well conditioned too. This allows each new maximizer to be
compared back to the old objective, bounding its old-objective loss by $B_t$. Finally, if the
distance margin at radius $P$ exceeds $B_t$, no configuration farther than $P$ from the reference
can have so small a loss. The smallness conditions that make each step valid are stated in
Theorem~\ref{thm:main}.
 
\begin{lemma}[Compression]
\label{lem:compression}
For every $m$-subset $S$,
\[\norm{C_t(S)-C_{t+1}(S)}_2\le\rho_t.\]
\end{lemma}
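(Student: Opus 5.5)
The plan is to write the difference of compressed matrices as a compression of the projector difference and then use that compression by a row selector cannot increase the operator norm. By definition $C_s(S)=R_S\Proj_sR_S^\top$ for $s\in\{t,t+1\}$, so linearity gives
\[
C_t(S)-C_{t+1}(S)=R_S(\Proj_t-\Proj_{t+1})R_S^\top .
\]
Submultiplicativity of the induced operator norm then yields
\[
\norm{C_t(S)-C_{t+1}(S)}_2\le\norm{R_S}_2\,\norm{\Proj_t-\Proj_{t+1}}_2\,\norm{R_S^\top}_2 .
\]

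The one point to check is that $\norm{R_S}_2=\norm{R_S^\top}_2=1$. The rows of $R_S$ are $m\ge1$ distinct rows of $I_n$, so they are orthonormal and $R_SR_S^\top=I_m$. Hence every singular value of $R_S$ equals $1$. Equivalently, $R_S^\top$ is an isometric embedding of $\R^m$ into $\R^n$, and $R_S$ is a coordinate projection, so $\norm{R_Sx}_2\le\norm{x}_2$ for every $x$.

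Substituting this into the inequality gives $\norm{C_t(S)-C_{t+1}(S)}_2\le\rho_t$ for every $m$-subset $S$, as claimed. An alternative, variational route works as well. For symmetric $A=\Proj_t-\Proj_{t+1}$ we have $\norm{R_SAR_S^\top}_2=\max_{\norm{y}_2=1}\abs{y^\top R_SAR_S^\top y}$. Since $\norm{R_S^\top y}_2=\norm{y}_2=1$, each such quantity is bounded by $\max_{\norm{x}_2=1}\abs{x^\top Ax}=\norm{A}_2$.

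There is no real obstacle. The only care needed is to note that the bound is uniform in $S$ and requires no conditioning or nonsingularity of $C_s(S)$. It therefore holds even for subsets where $F_s(S)=-\infty$, which is what the later steps of the argument need.
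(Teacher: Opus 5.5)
Your proposal is correct and follows essentially the same route as the paper: write the difference as $R_S(\Proj_t-\Proj_{t+1})R_S^\top$, then apply submultiplicativity with $\norm{R_S}_2=\norm{R_S^\top}_2=1$, which holds because $R_S$ has orthonormal rows. Your variational alternative is also valid, but it is not needed.
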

\begin{proof}$C_t(S)-C_{t+1}(S)=R_S(\Proj_t-\Proj_{t+1})R_S^\top$. Since $R_S$ has orthonormal rows, $\norm{R_S}_2=\norm{R_S^\top}_2=1$, and submultiplicativity gives the bound.
\end{proof}
\begin{lemma}[Log-determinant stability]
\label{lem:logdet}
Let $X,Y\in\R^{m\times m}$ be symmetric with $X\succ0$ and $c:=\lmin(X)>0$. If
\begin{equation}
  \norm{Y-X}_2\le r\le\frac{c}{2},
  \label{eq:logdet_smallness}
\end{equation}
then $Y\succ0$ and
\begin{equation}
  \abs{\log\det Y-\log\det X}\le\frac{2mr}{c}.
  \label{eq:logdet_bound}
\end{equation}
\end{lemma}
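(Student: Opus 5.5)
The plan is to reduce everything to eigenvalues via Weyl's inequality and then compare the two log-determinants eigenvalue by eigenvalue, using the mean value theorem for the logarithm.

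\emph{Positive definiteness of $Y$.} Let $\lambda_1(X)\le\dots\le\lambda_m(X)$ and $\lambda_1(Y)\le\dots\le\lambda_m(Y)$ denote the ordered eigenvalues. Weyl's inequality for symmetric matrices gives
\[
\abs{\lambda_i(Y)-\lambda_i(X)}\le\norm{Y-X}_2\le r \qquad\text{for every } i .
\]
Since $\lambda_i(X)\ge c$ and $r\le c/2$, this yields $\lambda_i(Y)\ge c-r\ge c/2>0$. Hence $Y\succ0$, and $\log\det Y$ is finite.

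\emph{Reduction to a sum of scalar terms.} Write
\[
\log\det Y-\log\det X=\sum_{i=1}^m\bigl[\log\lambda_i(Y)-\log\lambda_i(X)\bigr].
\]
Fix $i$. Both $\lambda_i(X)$ and $\lambda_i(Y)$ lie in $[c/2,\infty)$, so by the mean value theorem there is some $\xi_i\ge c/2$ between them with
\[
\abs{\log\lambda_i(Y)-\log\lambda_i(X)}=\frac{\abs{\lambda_i(Y)-\lambda_i(X)}}{\xi_i}\le\frac{r}{c/2}=\frac{2r}{c}.
\]
Summing the $m$ terms gives $\abs{\log\det Y-\log\det X}\le 2mr/c$, which is \eqref{eq:logdet_bound}.

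\emph{Alternative route and main subtlety.} One could instead write $\log\det Y-\log\det X=\log\det\bigl(I_m+X^{-1/2}(Y-X)X^{-1/2}\bigr)$ and bound each eigenvalue of the inner perturbation by $r/c\le 1/2$. In that version one would use $\abs{\log(1+x)}\le 2\abs{x}$ for $\abs{x}\le1/2$. That argument is equally short, but I would present the Weyl version above.

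The only genuinely delicate step is making sure the mean-value point $\xi_i$ stays bounded below by $c/2$. This is exactly where the hypothesis $r\le c/2$ enters: it keeps $\lambda_i(Y)$ away from zero, and together with $\lambda_i(X)\ge c$ it puts $\xi_i$ on the segment between two numbers that are both at least $c/2$. The factor $2$ in the bound is the price of this lower bound.
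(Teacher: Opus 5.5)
Your argument is correct, but your main route differs from the paper's. The paper uses Weyl's inequality only to get $Y\succ0$, and then proves the bound multiplicatively. It writes $Y=X^{1/2}(I+B)X^{1/2}$ with $B=X^{-1/2}(Y-X)X^{-1/2}$, notes $\norm{B}_2\le r/c\le 1/2$, and applies $\abs{\log(1+x)}\le 2\abs{x}$ to the eigenvalues $\nu_i$ of $B$; this is exactly the route you list as your alternative. You instead let Weyl do double duty: it pairs the ordered eigenvalues of $X$ and $Y$, and you then apply the mean value theorem to $\log$ on each pair.

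Your version avoids matrix square roots. It also yields, with no extra work, the spectrum-aware refinement $\sum_i r/(\lambda_i(X)-r)\le mr/(c-r)\le 2mr/c$, since $\xi_i\ge\lambda_i(X)-r$. This is much smaller when only a few eigenvalues of $X$ are near $c$. Your argument is also essentially the one the paper itself uses in Lemma~\ref{lem:over_logdet}. There the matrices being compared, $\Proj_sD_S\Proj_s$, are singular, so the congruence trick is not directly available.

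The paper's relative form gives for free the intermediate bound $2\sum_i\abs{\nu_i}$. This scales with the rank of $Y-X$ rather than with $m$: it is at most $2kr/c$ when $Y-X$ has rank $k$. Getting a comparable refinement on your route needs Lidskii-type majorization ($\sum_i\abs{\lambda_i(Y)-\lambda_i(X)}$ bounded by the trace norm of $Y-X$), not plain Weyl.
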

\begin{proof}Write $E:=Y-X$. By Weyl's inequality \cite[Cor.~4.3.15]{hornjohnson2013}, $\lmin(Y)\ge c-r\ge c/2>0$, so $Y\succ0$. Since $X\succ0$ it has a symmetric positive-definite square root, and we may set  $ B:=X^{-1/2}EX^{-1/2}$ so that 
$  Y=X+E=X^{1/2}(I+B)X^{1/2}.$
 The matrix $B$ is symmetric because $E$ and $X^{-1/2}$ are, hence its eigenvalues $\nu_1,\dots,\nu_m$ are real. Taking determinants,
\begin{eqnarray*}
  \log\det Y-\log\det X&=&\log\det(I+B)\\
  &=&\sum_{i=1}^m\log(1+\nu_i).
\end{eqnarray*}
By submultiplicativity, $\norm{B}_2\le\norm{X^{-1/2}}_2^2\norm{E}_2=\norm{E}_2/c\le r/c\le1/2$, so $\abs{\nu_i}\le1/2$ for every $i$. Then, for $\abs{x}\le\tfrac12$ the scalar inequality
\begin{equation}
\begin{gathered}
  \abs{\log(1+x)}\le2\abs{x}
\end{gathered}
\label{eq:scalar_log}
\end{equation}
holds because for $x\in[0,1/2]$, one has $\log(1+x)\le x\le 2\abs{x}$, while for $x\in[-1/2,0]$,
\begin{eqnarray*}
  \abs{\log(1+x)}&=&-\log(1+x)\\
  &=&\int_{1+x}^{1}\frac{dt}{t}\le 2(-x)\\
  &=&2\abs{x},
\end{eqnarray*}
since $t\ge1/2$ throughout the interval of integration. Therefore
\begin{eqnarray*}
  \abs{\log\det Y-\log\det X} &\le& \sum_{i=1}^m\abs{\log(1+\nu_i)}\\
  &\le&2\sum_{i=1}^m\abs{\nu_i}\\
  &\le&2m\norm{B}_2\\
  &\le&\frac{2mr}{c}.
\end{eqnarray*}

This proves \eqref{eq:logdet_bound}.\end{proof}

Lemma~\ref{lem:logdet} gives the same conclusion if $X$ and $Y$ are exchanged, but its hypothesis must be checked for $X$. Thus, we may apply it in either direction between $C_t(S)$ and $C_{t+1}(S)$ if the minimum eigenvalue condition holds for chosen $X$. We use both directions below.
\begin{proposition}[Determinant lower bound for the smallest eigenvalue]
\label{prop:auto}
Let $\Proj$ be an orthogonal projector, $S$ an $m$-subset, and $C=(\Proj)_{SS}$. Then $0\preceq C\preceq I_m$, and if $\det C>0$ then
\begin{equation}
  \lmin(C)\ge\det C.
  \label{eq:auto}
\end{equation}
\end{proposition}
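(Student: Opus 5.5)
The plan has two steps: first sandwich $C$ between $0$ and $I_m$, then convert that sandwich into the eigenvalue bound.

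\textbf{The sandwich.} I would write $C=R_S\Proj R_S^\top$. Since $\Proj$ is an orthogonal projector, it is symmetric and idempotent, and its eigenvalues lie in $\{0,1\}$, so $0\preceq\Proj\preceq I_n$. Congruence by $R_S$ preserves the Loewner order. This gives
\[
0\preceq R_S\Proj R_S^\top\preceq R_SR_S^\top=I_m,
\]
where the last equality uses that $R_S$ has orthonormal rows. Concretely, for any $x\in\R^m$ we have $x^\top Cx=\norm{\Proj R_S^\top x}_2^2\in[0,\norm{x}_2^2]$. Hence all eigenvalues $\mu_1\le\dots\le\mu_m$ of $C$ lie in $[0,1]$.

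\textbf{The eigenvalue bound.} Assume $\det C>0$. Then every $\mu_i>0$, and
\[
\det C=\mu_1\prod_{i\ge2}\mu_i\le\mu_1=\lmin(C),
\]
because each factor $\mu_i$ with $i\ge2$ lies in $(0,1]$. When $m=1$ the product is empty and equality holds.

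No step here is a real obstacle. The only point that needs care is the upper bound $C\preceq I_m$. It relies on $\Proj$ being an orthogonal projector, not an oblique one, so that $\norm{\Proj}_2\le1$. It also relies on $R_S$ being a row selection with $R_SR_S^\top=I_m$. Both facts are given in the setup.
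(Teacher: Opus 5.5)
Your proof is correct and follows the paper's argument. The paper also obtains $0\preceq C\preceq I_m$ by applying congruence with the row selector $R_S$ to $0\preceq\Pi\preceq I_n$, and then bounds every eigenvalue factor of $\det C$ other than the smallest by $1$; your quadratic-form identity $x^\top Cx=\|\Pi R_S^\top x\|_2^2$ and your remark that $\det C>0$ forces all eigenvalues to be positive just spell out details the paper leaves implicit.
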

\begin{proof}Since $0\preceq\Proj\preceq I_n$, congruence by $R_S$ gives $0\preceq C\preceq I_m$, so every eigenvalue of $C$ lies in $[0,1]$. Writing $\det C=\prod_{i=1}^m\lambda_i(C)$ and bounding every factor other than the smallest by $1$ gives us \eqref{eq:auto}. 
\end{proof}

Proposition~\ref{prop:auto} shows that a configuration with determinant bounded away from zero automatically has its smallest eigenvalue bounded away from zero as well (and hence has controlled condition number because $\lambda_{\max} \leq 1$);  the conditioning step in Theorem~\ref{thm:main} supplies this positivity. Thus Theorem~\ref{thm:main} requires no uniform lower bound on $\lmin(C_t(S))$ over all configurations. This argument applies only when $m\le K$; the oversampled case uses the separate conditioning argument that will be introduced later in Section~\ref{subsec:oversampled}.

\begin{theorem}[Distance margin stability under projector perturbation]
\label{thm:main}
Assume $m\le K$ and Assumption~\ref{ass:movement}. Let $\Sstar\in\argmax F_t$ be a reference maximizer and define
 $ \lstar:=\lmin\bigl(C_t(\Sstar)\bigr),$ and
  $\dstar:=\det C_t(\Sstar)>0 .$ 
Let $r$ satisfy $\rho_t\le r$, and define
\begin{gather*}
\eta_{t,1}:=\frac{2mr}{\lstar},\text{ and }
\eta_{t,2}:=\frac{2mr\,e^{\eta_{t,1}}}{\dstar},\\\text{ and }
B_t:=\eta_{t,1}+\eta_{t,2}.
\end{gather*}
Suppose the two smallness conditions

\begin{equation}
\begin{gathered}
r\le\frac{\lstar}{2},
\qquad
r\le\frac{\dstar e^{-\eta_{t,1}}}{2}.
\end{gathered}
\label{eq:smallness}
\end{equation}

hold. Then:
\begin{enumerate}[label=(\alph*),leftmargin=*]
\item Objective loss. For every $S\in\argmax F_{t+1}$,
\begin{equation}
  F_t(\Sstar)-F_t(S)\le B_t.
  \label{eq:objective_loss}
\end{equation}
\item Movement. For every integer $P\ge0$,
\begin{equation}
\begin{gathered}
  \Marg(P)>B_t
  \Longrightarrow\quad \dmatch(\Sstar,S)\le P\\
  \text{ for every }S\in\argmax F_{t+1}.
\end{gathered}
\label{eq:main_implication}
\end{equation}
\end{enumerate}
\end{theorem}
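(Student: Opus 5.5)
The plan follows the four-step outline given before Proposition~\ref{prop:auto}. Everything is derived from chaining Lemma~\ref{lem:compression} and Lemma~\ref{lem:logdet}, first at the reference maximizer $\Sstar$ and then at an arbitrary new maximizer $S\in\argmax F_{t+1}$.

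\emph{Step 1 (the reference maximizer survives).} Apply Lemma~\ref{lem:logdet} with $X=C_t(\Sstar)$, $Y=C_{t+1}(\Sstar)$ and $c=\lstar$. Lemma~\ref{lem:compression} gives $\norm{Y-X}_2\le\rho_t\le r$, and the first smallness condition gives $r\le\lstar/2$. Hence $C_{t+1}(\Sstar)\succ0$ and
\[
F_{t+1}(\Sstar)\ge F_t(\Sstar)-\eta_{t,1}.
\]

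\emph{Step 2 (the new maximizer is well conditioned).} For $S\in\argmax F_{t+1}$, optimality gives $F_{t+1}(S)\ge F_{t+1}(\Sstar)$. Exponentiating,
\[
\det C_{t+1}(S)\ge \dstar e^{-\eta_{t,1}}>0.
\]
Since $C_{t+1}(S)$ is a compression of the projector $\Proj_{t+1}$, Proposition~\ref{prop:auto} applies and yields $\lmin(C_{t+1}(S))\ge \dstar e^{-\eta_{t,1}}=:c'$.

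\emph{Step 3 (transfer back to the old objective).} Apply Lemma~\ref{lem:logdet} in the reverse direction, with $X=C_{t+1}(S)$, $Y=C_t(S)$ and $c=c'$. The second smallness condition is exactly $r\le c'/2$, so $C_t(S)\succ0$ and
\[
\abs{F_t(S)-F_{t+1}(S)}\le \frac{2mr}{c'}=\frac{2mr\,e^{\eta_{t,1}}}{\dstar}=\eta_{t,2}.
\]
Chaining the three estimates gives
\[
F_t(S)\ge F_{t+1}(S)-\eta_{t,2}\ge F_{t+1}(\Sstar)-\eta_{t,2}\ge F_t(\Sstar)-\eta_{t,1}-\eta_{t,2},
\]
which is part (a).

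\emph{Step 4 (part (b)).} Argue by contraposition. Suppose $\dmatch(\Sstar,S)>P$ for some $S\in\argmax F_{t+1}$. Then $S$ belongs to the family over which $\Marg(P)$ is minimized, so $\Marg(P)\le F_t(\Sstar)-F_t(S)\le B_t$, contradicting $\Marg(P)>B_t$. If that family is empty, no such $S$ exists and the implication holds trivially.

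The main obstacle is Step 3. One cannot control $C_t(S)$ directly, since no uniform conditioning over subsets is assumed. The key is to obtain the eigenvalue lower bound at time $t+1$ from the determinant bound via Proposition~\ref{prop:auto}, and then to invoke Lemma~\ref{lem:logdet} with the roles of $X$ and $Y$ swapped, checking its hypothesis at $X=C_{t+1}(S)$ rather than at $C_t(S)$.
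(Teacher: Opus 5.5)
Your proposal is correct and follows the paper's proof essentially step for step. Both arguments use Lemma~\ref{lem:logdet} at $\Sstar$, then optimality combined with Proposition~\ref{prop:auto} at the new maximizer, then the reverse application of Lemma~\ref{lem:logdet} with $X=C_{t+1}(S)$, and finally the contradiction with the margin. One cosmetic point: Lemma~\ref{lem:logdet} is stated with $c=\lmin(X)$, so strictly you should apply it with $c=\lmin(C_{t+1}(S))$ and then use $\lmin(C_{t+1}(S))\ge c'$ to get $r\le c'/2\le\lmin(C_{t+1}(S))/2$ and $2mr/\lmin(C_{t+1}(S))\le 2mr/c'$, which is exactly how the paper phrases it.
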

\begin{proof}By Lemma~\ref{lem:nondegenerate}, $\dstar>0$, and by Proposition~\ref{prop:auto}, $\lstar\ge\dstar>0$.

First, we control the drift at the reference maximizer. By Lemma~\ref{lem:compression}, $\norm{C_{t+1}(\Sstar)-C_t(\Sstar)}_2\le\rho_t\le r$, and by the first condition in \eqref{eq:smallness}, $r\le\lstar/2$. Lemma~\ref{lem:logdet} with $X=C_t(\Sstar)$, $Y=C_{t+1}(\Sstar)$, and $c=\lstar$ gives $C_{t+1}(\Sstar)\succ0$ and
\begin{equation}
  \abs{F_{t+1}(\Sstar)-F_t(\Sstar)}\le\frac{2mr}{\lstar}=\eta_{t,1}.
  \label{eq:step1}
\end{equation}
Next, let $S\in\argmax F_{t+1}$. By optimality at time $t+1$ and \eqref{eq:step1},
\[
  F_{t+1}(S)\ge F_{t+1}(\Sstar)\ge F_t(\Sstar)-\eta_{t,1}=\log\dstar-\eta_{t,1},
\]
and so $\det C_{t+1}(S)\ge\dstar e^{-\eta_{t,1}}>0$. The matrix $C_{t+1}(S)$ is positive semidefinite with positive determinant, hence positive definite, and Proposition~\ref{prop:auto} applied to $\Proj_{t+1}$ gives us
\begin{equation}
  \lmin\bigl(C_{t+1}(S)\bigr)\ge\det C_{t+1}(S)\ge\dstar e^{-\eta_{t,1}}.
  \label{eq:step2}
\end{equation}
Now compare the new maximizer back to time $t$. By Lemma~\ref{lem:compression}, $\norm{C_t(S)-C_{t+1}(S)}_2\le\rho_t\le r$, and the second condition in \eqref{eq:smallness} together with \eqref{eq:step2} gives
\[
  r\le\frac{\dstar e^{-\eta_{t,1}}}{2}\le\frac{\lmin\bigl(C_{t+1}(S)\bigr)}{2}.
\]
Lemma~\ref{lem:logdet}, applied with $X=C_{t+1}(S)$ and $Y=C_t(S)$, therefore gives
\begin{eqnarray}
  \abs{F_t(S)-F_{t+1}(S)}
  &\le&\frac{2mr}{\lmin(C_{t+1}(S))}\nonumber\\
  &\le&\frac{2mr\,e^{\eta_{t,1}}}{\dstar}
  =\eta_{t,2}.
  \label{eq:step3}
\end{eqnarray}
Combining \eqref{eq:step1}, optimality of $S$ for $F_{t+1}$, and \eqref{eq:step3}:
\begin{align*}
&F_t(\Sstar)-F_t(S)
  =\underbrace{\bigl[F_t(\Sstar)-F_{t+1}(\Sstar)\bigr]}_{\le\;\eta_{t,1}}\\
&\qquad \qquad \qquad \quad+\underbrace{\bigl[F_{t+1}(\Sstar)-F_{t+1}(S)\bigr]}_{\le\;0}\\
&\qquad \qquad \qquad \quad+\underbrace{\bigl[F_{t+1}(S)-F_t(S)\bigr]}_{\le\;\eta_{t,2}} \le B_t,
\end{align*}
which is \eqref{eq:objective_loss}.

Finally, assume $\Marg(P)>B_t$, let $S\in\argmax F_{t+1}$, and suppose for the sake of contradiction that $\dmatch(\Sstar,S)>P$. Then $S$ is feasible in the minimum defining $\Marg(P)$, so $F_t(\Sstar)-F_t(S)\ge\Marg(P)>B_t$, contradicting \eqref{eq:objective_loss}. Therefore, $\dmatch(\Sstar,S)\le P$ which proves the statement.
\end{proof}

\begin{corollary}[Determinant-based movement bound]
\label{cor:simple}
Under the hypotheses of Theorem~\ref{thm:main} except \eqref{eq:smallness}, suppose instead that
\begin{equation}
\begin{gathered}
  r\le\frac{\dstar}{4}
\quad\text{and}\quad
\frac{2mr}{\dstar}\le\log2.
\end{gathered}
\label{eq:simple_smallness}
\end{equation}
Then \eqref{eq:smallness} holds, $B_t\le\frac{6mr}{\dstar}$, and so
\begin{equation}
\begin{gathered}
  \Marg(P)>\frac{6mr}{\dstar}~
  \Longrightarrow\quad \dmatch(\Sstar,S)\le P~\\
  \text{for every }S\in\argmax F_{t+1}.
\end{gathered}
\label{eq:simple_implication}
\end{equation}
\end{corollary}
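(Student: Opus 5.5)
The plan is to verify the two conditions in \eqref{eq:smallness} from \eqref{eq:simple_smallness}, bound $B_t$ by $6mr/\dstar$, and then invoke Theorem~\ref{thm:main}(b).

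The key tool is Proposition~\ref{prop:auto}, which gives $\lstar\ge\dstar>0$. Hence
\[
\eta_{t,1}=\frac{2mr}{\lstar}\le\frac{2mr}{\dstar}\le\log 2,
\]
so $e^{\eta_{t,1}}\le 2$ and $e^{-\eta_{t,1}}\ge\tfrac12$.

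\textbf{Checking the smallness conditions.} For the first condition in \eqref{eq:smallness}, we have $r\le\dstar/4\le\dstar/2\le\lstar/2$. For the second, $\dstar e^{-\eta_{t,1}}/2\ge\dstar/4\ge r$. So both parts of \eqref{eq:smallness} hold, and Theorem~\ref{thm:main} applies with the same $r$.

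\textbf{Bounding $B_t$.} The two terms satisfy $\eta_{t,1}\le 2mr/\dstar$ and
\[
\eta_{t,2}=\frac{2mr\,e^{\eta_{t,1}}}{\dstar}\le\frac{4mr}{\dstar}.
\]
Adding them gives $B_t\le 6mr/\dstar$.

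\textbf{Conclusion.} If $\Marg(P)>6mr/\dstar$, then $\Marg(P)>B_t$. Theorem~\ref{thm:main}(b) then gives $\dmatch(\Sstar,S)\le P$ for every $S\in\argmax F_{t+1}$, which is \eqref{eq:simple_implication}.

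There is no real obstacle here. The one step needing care is converting the assumption $2mr/\dstar\le\log 2$ into $e^{\eta_{t,1}}\le2$. This passes through $\eta_{t,1}\le 2mr/\dstar$, which relies on the eigenvalue–determinant comparison $\lstar\ge\dstar$.
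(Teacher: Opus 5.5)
Your proposal is correct and follows the paper's proof essentially step for step. Both use $\lstar\ge\dstar$ from Proposition~\ref{prop:auto} to get $\eta_{t,1}\le 2mr/\dstar\le\log 2$, which gives $e^{\pm\eta_{t,1}}$ bounds that verify \eqref{eq:smallness} and yield $B_t\le 6mr/\dstar$, and then both conclude via Theorem~\ref{thm:main}(b).
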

\begin{proof}By Proposition~\ref{prop:auto}, $\lstar\ge\dstar$, so $r\le\dstar/4\le\lstar/2$, which is the first condition in \eqref{eq:smallness}, and $\eta_{t,1}=2mr/\lstar\le2mr/\dstar\le\log2$. Hence $e^{-\eta_{t,1}}\ge\tfrac12$ and
\[
  \frac{\dstar e^{-\eta_{t,1}}}{2}\ge\frac{\dstar}{4}\ge r,
\]
which is the second condition in \eqref{eq:smallness}. Finally $e^{\eta_{t,1}}\le e^{\log2}=2$, so $\eta_{t,2}\le2\cdot2mr/\dstar=4mr/\dstar$ and $B_t\le2mr/\dstar+4mr/\dstar=6mr/\dstar$. Thus $\Marg(P)>6mr/\dstar$ implies $\Marg(P)>B_t$, and the second part of Theorem~\ref{thm:main} gives the conclusion. 
\end{proof}

We have not optimized the constant $6m$ and make no claim of sharpness in $m$.

\begin{corollary}[Average-case persistence from the second-best gap]
\label{cor:average_case}
Assume the hypotheses of Theorem~\ref{thm:main}, and suppose that $\Sstar$ is the unique maximizer of $F_t$. Define its second-best gap by
\[
g_t
:=
F_t(\Sstar)
-
\max_{\substack{\abs S=m\\ S\ne\Sstar}}F_t(S)
>0.
\]
If $F_{t+1}$ has a maximizer different from $\Sstar$, then
\[
g_t\le B_t.
\]

Consequently, consider a family of random baseline--perturbed
pairs indexed by a perturbation scale $\varepsilon>0$.
Assume that the hypotheses of Theorem~3.4 hold almost surely
and that the baseline maximizer $S_t^\star$ is unique almost
surely. Suppose there exist constants $C,c,L,x_0>0$,
independent of $\varepsilon$, such that
\[
  \Pr(g_t\le x)\le Cx^c
  \qquad\text{for }0\le x\le x_0,
\]
and
\[
  B_t\le L\varepsilon
  \qquad\text{almost surely}.
\]
Then, for $0<\varepsilon\le x_0/L$,
\[
  \Pr\bigl(\argmax F_{t+1}\ne\{S_t^\star\}\bigr)
  \le CL^c\varepsilon^c.
\]
\end{corollary}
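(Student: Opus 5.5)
The argument has two parts. The deterministic inequality $g_t\le B_t$ follows from part (a) of Theorem~\ref{thm:main}. The probabilistic bound then follows by event inclusion and the assumed small-ball bound on $g_t$.

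\textbf{Deterministic part.} Suppose $F_{t+1}$ has a maximizer $S\ne\Sstar$. Part (a) of Theorem~\ref{thm:main} applies to every element of $\argmax F_{t+1}$, so $F_t(\Sstar)-F_t(S)\le B_t$. Since $S$ is an $m$-subset different from $\Sstar$, it is feasible in the maximum defining $g_t$. Hence
\[
F_t(S)\le \max_{\abs{S'}=m,\,S'\ne\Sstar}F_t(S'),
\qquad\text{so}\qquad
g_t\le F_t(\Sstar)-F_t(S)\le B_t.
\]
One could also use part (b) with $P=0$: then $\Marg(0)=g_t$, because $\dmatch(S,\Sstar)>0$ exactly when $S\ne\Sstar$ by Proposition~\ref{prop:matching_metric}. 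The direct route is cleaner. Note also that $\argmax F_{t+1}$ is nonempty, so $\argmax F_{t+1}\ne\{\Sstar\}$ is equivalent to $F_{t+1}$ having a maximizer different from $\Sstar$.

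\textbf{Probabilistic part.} On the almost-sure event where the hypotheses of Theorem~\ref{thm:main} hold, $\Sstar$ is unique, and $B_t\le L\varepsilon$, the deterministic part gives the inclusion
\[
\{\argmax F_{t+1}\ne\{\Sstar\}\}\subseteq\{g_t\le B_t\}\subseteq\{g_t\le L\varepsilon\}.
\]
For $0<\varepsilon\le x_0/L$ we have $x:=L\varepsilon\in(0,x_0]$, so the tail assumption gives
\[
\Pr(g_t\le L\varepsilon)\le C(L\varepsilon)^c=CL^c\varepsilon^c.
\]
The null set where the almost-sure hypotheses fail does not change any probability.

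\textbf{Remaining care.} The only delicate point is measurability. The event $\{\argmax F_{t+1}\ne\{\Sstar\}\}$ and the random variable $g_t$ must be measurable, which is implicit in the statement. Both are finite maxima and minima of measurable functions of the random projectors, so I would simply note this.
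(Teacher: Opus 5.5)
Your proposal is correct and follows the paper's proof essentially step for step. Theorem~\ref{thm:main}(a), applied to a maximizer $S\ne\Sstar$, gives $g_t\le F_t(\Sstar)-F_t(S)\le B_t$. The inclusion $\{\argmax F_{t+1}\ne\{\Sstar\}\}\subseteq\{g_t\le L\varepsilon\}$, combined with the lower-tail bound at $x=L\varepsilon\le x_0$, then yields $CL^c\varepsilon^c$. Your side remarks are accurate and are points the paper leaves implicit: the alternative route via $\Marg(0)=g_t$, the nonemptiness of $\argmax F_{t+1}$, and measurability.
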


\begin{proof}
Suppose that $F_{t+1}$ has a maximizer $S\ne\Sstar$. By Theorem~\ref{thm:main}(a),
\[
F_t(\Sstar)-F_t(S)\le B_t.
\]
Since $g_t$ is the smallest old objective loss among configurations different from $\Sstar$,
\[
g_t
\le
F_t(\Sstar)-F_t(S)
\le
B_t.
\]
Hence the event that the optimizer changes is contained in the event $\{g_t\le B_t\}$. Under the assumed bound $B_t\le L\varepsilon$,
\[
\begin{aligned}
\Pr\!\left(\argmax F_{t+1}\ne\{\Sstar\}\right)
&\le \Pr(g_t\le L\varepsilon)\\
&\le C(L\varepsilon)^c.
\end{aligned}
\]
which is the claim. \end{proof}

For Laplacian perturbations, choose
$r=4\varepsilon/\delta$ as supplied by Lemma~4.2.
On any family with fixed $m$, a uniformly positive eigengap,
and uniformly positive lower bounds on $\lambda_t^\star$
and $d_t^\star$, the smallness conditions hold for
sufficiently small $\varepsilon$, and $B_t=O(\varepsilon)$
uniformly over the family. Determining the lower-tail
exponent $c$ for natural random graph or data-driven graph
models remains an open problem.

\section{Laplacian Perturbations and Why Projector Drift Alone Is Not Enough}
\label{sec:laplacian}
We now consider the case where $\Proj_t$ is the low-frequency spectral projector of a graph Laplacian, and show that perturbation size and eigengap alone cannot provide a movement bound that tends to zero with the perturbation.

Let $G_t=(V,E,w_t)$ be a connected weighted undirected graph with positive edge weights $w_{t,ij}$, and write $j\sim i$
when $\{i,j\}\in E$. Let $A_t$ be the weighted adjacency matrix, $D_t$ the diagonal matrix of
weighted degrees, and $L_t=D_t-A_t$ the combinatorial Laplacian, so that $(L_t\mathbf x)_i=\sum_{j\sim i}w_{t,ij}(x_i-x_j)$. Its eigenvalues are $0=\lambda_1\le\cdots\le\lambda_n$ with orthonormal eigenvectors $\phi_1,\dots,\phi_n$, the graph Fourier modes \cite{chung1997spectral,shuman2013emerging}. For a bandwidth $K<n$ set $U_t=[\phi_1,\dots,\phi_K]$ and $\Proj_t=U_tU_t^\top$.
\begin{assumption}[Laplacian perturbation]
\label{ass:spectral}
The edge set $E$ is the same at times $t$ and $t+1$, only the weights change, and $G_{\mathrm{mv}}$ is the unweighted graph $(V,E)$. Furthermore,
  $ \varepsilon:=\norm{L_t-L_{t+1}}_2,$ and 
 $ \delta:=\lambda_{K+1}(L_t)-\lambda_K(L_t)>0,$ and
 $ \varepsilon<\frac{\delta}{4}.$
 \end{assumption}
\begin{lemma}[Projector drift via Davis--Kahan]
\label{lem:dk}
Under Assumption~\ref{ass:spectral}, the rank-$K$ spectral projector $\Proj_{t+1}$ is uniquely determined and
\begin{equation}
  \rho_t=\norm{\Proj_t-\Proj_{t+1}}_2\le\frac{\varepsilon}{\delta-\varepsilon}\le\frac{4\varepsilon}{3\delta}\leq\frac{4\varepsilon}{\delta}.
  \label{eq:dk_bound}
\end{equation}
\end{lemma}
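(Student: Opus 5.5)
The plan is to combine Weyl's inequality, which shows the spectral gap at index $K$ persists for $L_{t+1}$, with a Davis--Kahan $\sin\Theta$ bound in the form that needs a gap only for one of the two operators. The final two inequalities in \eqref{eq:dk_bound} are arithmetic consequences of $\varepsilon<\delta/4$.

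\emph{Step 1 (uniqueness of $\Proj_{t+1}$).} By Weyl's inequality \cite[Cor.~4.3.15]{hornjohnson2013}, $\abs{\lambda_i(L_{t+1})-\lambda_i(L_t)}\le\varepsilon$ for every $i$. Hence
\[
\lambda_{K+1}(L_{t+1})-\lambda_K(L_{t+1})\ge\delta-2\varepsilon>\delta/2>0 .
\]
So the $K$ smallest eigenvalues of $L_{t+1}$ are strictly separated from the rest. The span of the corresponding eigenvectors is therefore independent of the choice of eigenbasis inside degenerate eigenspaces, and $\Proj_{t+1}$ is uniquely determined.

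\emph{Step 2 (the $\sin\Theta$ bound).} Let $Q_t:=I-\Proj_t$. Since both projectors have rank $K$, the standard identity gives $\norm{\Proj_t-\Proj_{t+1}}_2=\norm{Q_t\Proj_{t+1}}_2$. Take the Sylvester equation obtained from $L_{t+1}\Proj_{t+1}=\Proj_{t+1}L_{t+1}\Proj_{t+1}$ and multiply it on the left by $Q_t$. Writing $L_{t+1}=L_t+E$ with $\norm{E}_2=\varepsilon$, this yields
\[
Q_tL_tQ_t\,(Q_t\Proj_{t+1})-(Q_t\Proj_{t+1})\,\Proj_{t+1}L_{t+1}\Proj_{t+1}=-Q_tE\Proj_{t+1}.
\]
The spectrum of $Q_tL_t$ restricted to $\operatorname{ran}Q_t$ lies in $[\lambda_{K+1}(L_t),\infty)$. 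The spectrum of $L_{t+1}$ restricted to $\operatorname{ran}\Proj_{t+1}$ lies in $(-\infty,\lambda_K(L_{t+1})]\subseteq(-\infty,\lambda_K(L_t)+\varepsilon]$. These two sets are separated by at least $\delta-\varepsilon>0$. The Davis--Kahan/Bhatia bound for Sylvester equations with spectra separated by an interval, for which the constant is $1$ \cite{davis1970rotation,bhatia1997matrix}, then gives
\[
\norm{Q_t\Proj_{t+1}}_2\le\frac{\norm{E}_2}{\delta-\varepsilon}=\frac{\varepsilon}{\delta-\varepsilon}.
\]
To keep this step self-contained, I would state the separated-spectra Sylvester bound explicitly. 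It is a direct consequence of the quadratic-form argument: for a unit vector pair achieving the norm of $X=Q_t\Proj_{t+1}$, compare Rayleigh quotients of the two restricted operators.

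\emph{Step 3 (arithmetic).} From $\varepsilon<\delta/4$ we get $\delta-\varepsilon>3\delta/4$. Hence $\varepsilon/(\delta-\varepsilon)\le 4\varepsilon/(3\delta)\le4\varepsilon/\delta$.

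The main obstacle is Step 2. Two details need care there: applying the gap to the \emph{mixed} pair, where the old complement spectrum is compared against the new bottom spectrum with Weyl used for the shift, which is what produces $\delta-\varepsilon$ rather than $\delta-2\varepsilon$; and the equality between the projector-difference norm and $\norm{Q_t\Proj_{t+1}}_2$ for equal-rank projectors.
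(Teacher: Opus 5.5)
Your proposal is correct and is essentially the paper's proof: Weyl's inequality shows the gap persists, and the mixed Sylvester equation compares the old complementary spectrum with the new bottom spectrum to get separation $\delta-\varepsilon$; the constant-$1$ Davis--Kahan bound and the equal-rank identity $\norm{\Proj_t-\Proj_{t+1}}_2=\norm{(I-\Proj_t)\Proj_{t+1}}_2$ then finish, followed by the same arithmetic. The differences are only cosmetic: you write the Sylvester equation with projectors restricted to their ranges rather than in the coordinates $X=(U_t^\perp)^\top U_{t+1}$, and you sketch the Rayleigh-quotient proof of the one-sided bound instead of citing it.
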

\begin{proof} Write $L_{t+1}=L_t+\Delta$ with $\norm{\Delta}_2\le\varepsilon$. By Weyl's inequality \cite[Cor.~4.3.15]{hornjohnson2013},
\[
  \lambda_{K+1}(L_{t+1})-\lambda_K(L_{t+1})\ge\delta-2\varepsilon>0,
\]
so the rank-$K$ spectral projector of $L_{t+1}$ is well defined. Let $U_t^\perp$ consist of eigenvectors of $L_t$ for $\lambda_{K+1}(L_t),\dots,\lambda_n(L_t)$, with $(U_t^\perp)^\top L_tU_t^\perp=\Lambda_t^\perp$, and let $U_{t+1}$ contain the first $K$ eigenvectors of $L_{t+1}$, with $L_{t+1}U_{t+1}=U_{t+1}\Lambda_{t+1}$. Put $X:=(U_t^\perp)^\top U_{t+1}$. Then
\begin{eqnarray*}
  \Lambda_t^\perp X-X\Lambda_{t+1}
  &=&(U_t^\perp)^\top L_tU_{t+1}\\
  &&-(U_t^\perp)^\top L_{t+1}U_{t+1}\\
  &=&-(U_t^\perp)^\top \Delta U_{t+1}.
\end{eqnarray*}
Every eigenvalue of $\Lambda_t^\perp$ is at least $\lambda_{K+1}(L_t)$ and every eigenvalue of $\Lambda_{t+1}$ is at most $\lambda_K(L_{t+1})\le\lambda_K(L_t)+\varepsilon$, so the two groups of eigenvalues are separated by at least $\delta-\varepsilon>0$. The Davis--Kahan $\sin\Theta$ theorem \cite{davis1970rotation,stewart1990matrix} gives
\[
  \norm{X}_2\le\frac{\norm{(U_t^\perp)^\top \Delta U_{t+1}}}{\delta-\varepsilon}\le\frac{\varepsilon}{\delta-\varepsilon}.
\]
For orthogonal projectors of equal rank, $\norm{\Proj_t-\Proj_{t+1}}_2=\norm{(I-\Proj_t)\Proj_{t+1}}_2$ \cite[Thm.~I.5.5]{stewart1990matrix}, and
\begin{eqnarray*}
  (I-\Proj_t)\Proj_{t+1}&=&U_t^\perp\bigl[(U_t^\perp)^\top U_{t+1}\bigr]U_{t+1}^\top\\
  &=&U_t^\perp XU_{t+1}^\top .
\end{eqnarray*}
Multiplication by matrices with orthonormal columns preserves the nonzero singular values, so $\norm{\Proj_t-\Proj_{t+1}}_2=\norm{X}_2$. And so finally, $\varepsilon<\delta/4$ gives $\delta-\varepsilon>3\delta/4$, which gives the last two inequalities in \eqref{eq:dk_bound}. 
\end{proof}

\begin{corollary}[Laplacian projector movement stability]
\label{cor:laplacian}
Assume $m\le K$ and Assumption~\ref{ass:spectral}, and let $\Sstar\in\argmax F_t$ with $\dstar=\det C_t(\Sstar)$. If
\begin{equation}
\begin{gathered}
  \frac{4\varepsilon}{\delta}\le\frac{\dstar}{4}~
  \text{and}~
  \frac{8m\varepsilon}{\dstar\delta}\le\log2,
\end{gathered}
\label{eq:laplacian_smallness}
\end{equation}
then for every integer $P\ge0$,
\begin{equation}
\begin{gathered}
  \Marg(P)>\frac{24m\varepsilon}{\dstar\delta}~
  \Longrightarrow\quad \dmatch(\Sstar,S)\le P~\\
  \text{for every }S\in\argmax F_{t+1}.
\end{gathered}
\label{eq:laplacian_implication}
\end{equation}
\end{corollary}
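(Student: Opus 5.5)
The plan is to reduce Corollary~\ref{cor:laplacian} to Corollary~\ref{cor:simple} by choosing the drift radius $r:=4\varepsilon/\delta$. Nothing new has to be proved about determinants; the corollary is a substitution in which the Laplacian data enter only through Lemma~\ref{lem:dk}.

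First, under Assumption~\ref{ass:spectral}, Lemma~\ref{lem:dk} shows that $\Proj_{t+1}$ is a well-defined rank-$K$ orthogonal projector. It also gives $\rho_t\le 4\varepsilon/\delta=r$. Hence the general setting of Section~\ref{sec:main} applies with $\Proj_t,\Proj_{t+1}$ the spectral projectors, and the hypothesis $\rho_t\le r$ of Theorem~\ref{thm:main} holds. The mobility graph is the support graph $(V,E)$, which is connected because $G_t$ is connected. So Assumption~\ref{ass:movement} holds. By Lemma~\ref{lem:nondegenerate}, $\dstar>0$, since $m\le K$.

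Next, we check the smallness conditions \eqref{eq:simple_smallness} of Corollary~\ref{cor:simple} for this $r$:
\begin{itemize}
\item The condition $r\le\dstar/4$ is exactly the first inequality in \eqref{eq:laplacian_smallness}.
\item The condition $2mr/\dstar\le\log 2$ reads $8m\varepsilon/(\dstar\delta)\le\log2$, which is the second inequality.
\end{itemize}
Corollary~\ref{cor:simple} then yields $B_t\le 6mr/\dstar=24m\varepsilon/(\dstar\delta)$. It also gives the implication that $\Marg(P)>24m\varepsilon/(\dstar\delta)$ forces $\dmatch(\Sstar,S)\le P$ for every $S\in\argmax F_{t+1}$ and every integer $P\ge0$, which is \eqref{eq:laplacian_implication}.

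The only point needing care is the bookkeeping of constants. One could use the sharper bound $\rho_t\le 4\varepsilon/(3\delta)$ from Lemma~\ref{lem:dk}, but the statement is phrased with the cruder value $4\varepsilon/\delta$, so we take $r=4\varepsilon/\delta$ to match it. This is legitimate because Theorem~\ref{thm:main} only requires $\rho_t\le r$, not equality. No genuine obstacle remains.
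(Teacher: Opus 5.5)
Your proposal is correct and matches the paper's proof: both apply Corollary~\ref{cor:simple} with $r=4\varepsilon/\delta$ (justified by Lemma~\ref{lem:dk}), note that the two smallness conditions are exactly \eqref{eq:simple_smallness} for this $r$, and read off the threshold $6mr/\dstar=24m\varepsilon/(\dstar\delta)$. Your extra checks on the connectivity of the support graph and the positivity of $\dstar$ are harmless bookkeeping that the paper leaves implicit.
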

\begin{proof}Use Corollary~\ref{cor:simple} with $r=4\varepsilon/\delta$, which satisfies $r\ge\rho_t$ by Lemma~\ref{lem:dk}. Conditions \eqref{eq:laplacian_smallness} are  exactly \eqref{eq:simple_smallness} for this $r$, since $2mr/\dstar=8m\varepsilon/(\dstar\delta)$, and the threshold is $6mr/\dstar=24m\varepsilon/(\dstar\delta)$. 
\end{proof}

Corollary~\ref{cor:laplacian} uses the simplified bound $4\varepsilon/\delta$ to keep the constants uniform. Using $r=\varepsilon/(\delta-\varepsilon)$ instead gives a smaller threshold $6m\varepsilon/\bigl(\dstar(\delta-\varepsilon)\bigr)$ under the smallness conditions, and using $r=\rho_t$ directly, when $\rho_t$ is computed, is still better.
\subsection{Perturbation Size and Eigengap Alone Do Not Bound Movement}
We now show that perturbation size and eigengap alone cannot control movement as the perturbation tends to zero. Our construction will keep the graph structure fixed and all edge weights positive, while an arbitrarily small weight change moves the optimizer across the entire path.
\begin{lemma}[Discrete Sturm oscillation on a weighted path]
\label{lem:sturm}
Let $L$ be the Laplacian of a path on $n$ vertices with strictly positive edge weights. Then the eigenvalues of $L$ are simple,
 $ 0=\lambda_1<\lambda_2<\cdots<\lambda_n,$
and, after deleting the zero coordinates,
an eigenvector for $\lambda_j$ has exactly $j-1$ sign changes. In particular, a nonconstant eigenvector with exactly one sign change belongs to $\lambda_2$.
\end{lemma}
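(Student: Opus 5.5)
The plan is to treat the weighted path Laplacian as a Jacobi (tridiagonal) matrix and run the classical three-term recurrence argument, in three steps.

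\textbf{Simplicity of the spectrum.} Label the vertices $1,\dots,n$ along the path, with edge weights $w_i>0$ on $\{i,i+1\}$. The eigenvalue equation $(L x)_i=\lambda x_i$ reads
\[
w_{i-1}(x_i-x_{i-1})+w_i(x_i-x_{i+1})=\lambda x_i .
\]
Since $w_i\neq0$, this determines $x_{i+1}$ from $x_i$ and $x_{i-1}$. Starting at the endpoint, where the equation is $w_1(x_1-x_2)=\lambda x_1$, the whole eigenvector is determined by $x_1$. So every eigenspace is one-dimensional, and the eigenvalues are simple. The bottom eigenvalue is $\lambda_1=0$ with the constant eigenvector, because the graph is connected. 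The constant vector has $0$ sign changes, which starts the count.

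\textbf{Counting sign changes.} I would use the Sturm sequence of leading principal minors. For fixed $\lambda$, define $p_k(\lambda)$ by $x_{k+1}=p_k(\lambda)\,x_1$, normalizing $x_1=1$. From the recurrence, $p_k$ is a polynomial of degree $k$ in $\lambda$ with leading sign $(-1)^k$ after scaling by $\prod w_i$. It is, up to positive factors, the characteristic polynomial of a leading block of a modified Jacobi matrix. Then $\lambda$ is an eigenvalue exactly when the last boundary equation holds; equivalently, a fictitious $p_n(\lambda)=0$.

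I would then prove the interlacing property: the zeros of consecutive $p_k$ strictly interlace. The standard argument is induction using the three-term recurrence, together with the fact that $p_{k-1}$ and $p_k$ cannot vanish simultaneously, since otherwise all $p$'s would vanish, including $p_0=1$. The Sturm theorem then says that the number of sign changes in $(p_0(\lambda),\dots,p_{n-1}(\lambda))$ equals the number of zeros of $p_n$ below $\lambda$. Here zero entries are deleted, which is justified because the neighbours of a zero entry have strictly opposite signs by the recurrence. Evaluating at $\lambda=\lambda_j$ gives exactly $j-1$ sign changes.

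An alternative I might use instead is a continuity/homotopy argument. Sign changes can only be created or destroyed at the ends or through a double zero. Double zeros are excluded because $x_i=x_{i+1}=0$ forces $x\equiv0$. One would then deform the weights to uniform weights, where the eigenvectors are explicit cosines with $j-1$ sign changes. The eigenvalues stay simple throughout the deformation, so the ordering is preserved along the path of weights.

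\textbf{Main obstacle and final claim.} The main obstacle is bookkeeping when some $x_i=0$: I must show that a zero entry is never a ``touching'' zero, so that the count of sign changes is unambiguous. The recurrence handles this: if $x_i=0$, then $w_{i-1}(-x_{i-1})=w_i x_{i+1}$, so $x_{i-1}$ and $x_{i+1}$ have strictly opposite signs (or both vanish, which is impossible). The ``in particular'' claim then follows immediately from the count: an eigenvector with one sign change must have $j-1=1$, so $j=2$.
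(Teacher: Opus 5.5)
Your proposal is correct. The simplicity step is the paper's argument in different form. The paper notes that deleting the first row and last column of $L-\lambda I$ leaves an upper-triangular matrix with diagonal $-w_1,\dots,-w_{n-1}$, so $\operatorname{rank}(L-\lambda I)\ge n-1$; this is the same fact as your observation that the recurrence determines $x$ from $x_1$.

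The real difference is the oscillation count. The paper does not argue it at all; it cites Fiedler's oscillation theorem. You prove the path case yourself, in one of two ways:
\begin{itemize}
\item via the Sturm sequence $p_k(\lambda)=\det(L_k-\lambda I)/\prod_{i\le k}w_i$, where $L_k$ is simply the leading $k\times k$ block of $L$ itself and $x_{k+1}=p_k(\lambda)$ when $x_1=1$;
\item or via a homotopy to uniform weights.
\end{itemize}
The citation buys brevity and a result valid on any tree. Your route is self-contained and makes explicit what the phrase ``after deleting the zero coordinates'' silently relies on: $x_1,x_n\neq0$, and every interior zero sits between entries of strictly opposite sign. In the homotopy version, state $x_1,x_n\neq0$ explicitly, since that is what rules out sign changes appearing at the ends.

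One precision point in the Sturm version needs fixing. It is not true for every $\lambda$ that the number of sign changes of $(p_0(\lambda),\dots,p_{n-1}(\lambda))$ equals the number of zeros of $p_n$ below $\lambda$. That truncated sequence is the Sturm sequence of $L_{n-1}$, so it counts the zeros of $p_{n-1}$ below $\lambda$. At $\lambda=\lambda_j$ the two counts do coincide: $p_{n-1}(\lambda_j)\neq0$, and strict interlacing of the zeros of $p_{n-1}$ and $p_n$ places exactly $j-1$ zeros of $p_{n-1}$ below $\lambda_j$. Since evaluating at $\lambda_j$ is all you use, your conclusion stands once the step is phrased this way.
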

\begin{proof}A positively weighted path Laplacian is a real symmetric tridiagonal (Jacobi) matrix with nonzero off-diagonal entries $-w_i<0$. For any $\lambda$, deleting the first row and last column of $L-\lambda I$ leaves an
upper-triangular matrix with diagonal entries $-w_1,\dots,-w_{n-1}\ne0$, so $L-\lambda I$ has
rank at least $n-1$; every eigenvalue therefore has a one-dimensional eigenspace, and since $L$
is symmetric, its eigenvalues are simple. Fiedler's oscillation theorem \cite[Thm.~2.3]{fiedler1975acyclic} states that, after deleting zero coordinates, an eigenvector for $\lambda_j$ has exactly $j-1$ sign changes; see also \cite{fiedler1975property}. Therefore an eigenvector with exactly one sign change corresponds to $\lambda_2$.
\end{proof}

\begin{theorem}[No Movement Bound With Only Perturbation Size and Eigengap]
\label{thm:impossibility}
There is no function $g:(0,\infty)^2\to\R_{\ge0}$ satisfying both:
\begin{enumerate}[label=(\alph*),leftmargin=*]
\item for every fixed $\delta>0$, $\displaystyle\lim_{\varepsilon\to 0^+}g(\varepsilon,\delta)=0$.
\item for every $\varepsilon>0$ and every pair of connected weighted-graph Laplacians $L_t,L_{t+1}$ on a common vertex set and common edge set with $\norm{L_t-L_{t+1}}_2\le\varepsilon$ and $\delta_t:=\lambda_3(L_t)-\lambda_2(L_t)>0$, every reference maximizer $\Sstar$ and every $S\in\argmax F_{t+1}$ of the one-sensor, $K=2$ objective satisfy $\dmatch(\Sstar,S)\le g(\varepsilon,\delta_t)$.
\end{enumerate}
In more precise words, for every $n\ge4$, there is a positively weighted path $L_t$ on $n$ vertices with fixed baseline eigengap $\delta_n>0$ and a sequence of positively weighted path Laplacians $L^{(k)}$ on the same edge set such that $\varepsilon_k:=\norm{L^{(k)}-L_t}_2\to 0^+$, while the optimizer moves a matching distance of $n-1$. The same holds under any fixed tie-breaking rule at time $t$.
\end{theorem}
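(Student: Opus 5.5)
We take $m=1$ and $K=2$, which makes the objective explicit. For a one-sensor configuration $S=\{v\}$ we have $C_t(S)=\Proj_{t,vv}=\phi_1(v)^2+\phi_2(v)^2=\tfrac1n+\phi_2(v)^2$, because $\phi_1=\mathbf 1/\sqrt n$ on a connected graph. Hence $\argmax F_t=\argmax_v\abs{\phi_2(v)}$, and on a path $\dmatch(\{u\},\{v\})=\abs{u-v}$. The approach is to build a weighted path on which the two endpoints tie for $\max\abs{\phi_2}$, and then break the tie, in either direction, by arbitrarily small weight changes.

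\emph{Baseline.} Fix $n\ge4$ and a palindromic weight vector $w_i=w_{n-i}>0$; for instance take all weights equal to $1$. Let $J$ be the reversal permutation, so that $JL_tJ=L_t$. By Lemma~\ref{lem:sturm} every eigenvalue of $L_t$ is simple, so $\delta_n:=\lambda_3-\lambda_2>0$, and $J\phi_2=\pm\phi_2$. The case $J\phi_2=+\phi_2$ is impossible: a symmetric vector has an even number of sign changes, whereas $\phi_2$ has exactly one. So $\phi_2$ is antisymmetric and $\abs{\phi_2(1)}=\abs{\phi_2(n)}$.

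Next we show that $\phi_2$ is strictly monotone along the path. The eigen-equation gives the telescoping identity
\[
w_i\bigl(\phi_2(i+1)-\phi_2(i)\bigr)=-\lambda_2\sum_{j\le i}\phi_2(j).
\]
Since $\phi_2\perp\mathbf 1$ and $\phi_2$ has a single sign change, every partial sum $\sum_{j\le i}\phi_2(j)$ has one fixed strict sign for $i<n$. Hence all increments have the same strict sign. It follows that the endpoints $1$ and $n$ are exactly the maximizers of $F_t$, and every interior vertex is strictly worse by a gap $\gamma>0$.

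\emph{Perturbation.} Let $E$ be the Laplacian of the single edge $\{1,2\}$, and set $L(s)=L_t+sE$ for small $s\in\R$. This changes only that one weight and keeps it positive for small $\abs{s}$. Since $\lambda_2$ stays simple, $\phi_2(s)$ can be chosen real-analytic in $s$. Consider
\[
h(s):=\phi_2(s)(1)^2-\phi_2(s)(n)^2,
\]
which is real-analytic and satisfies $h(0)=0$. By continuity, for $\abs{s}$ small the interior vertices remain worse than both endpoints by at least $\gamma/2$. So whenever $h(s)\neq0$, the unique maximizer of $F_{t+1}$ is the endpoint favoured by the sign of $h(s)$.

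\emph{Main obstacle: $h\not\equiv0$.} I would establish this in one of two ways.
\begin{itemize}
\item Compute $h'(0)$ by first-order perturbation, $\phi_2'=-(L_t-\lambda_2)^{+}E\phi_2$, and use antisymmetry to simplify.
\item Alternatively, use analyticity on the whole interval of positive weights. Let $s\to\infty$ (edge $\{1,2\}$ very heavy, which effectively contracts vertices $1,2$). Then $\phi_2(s)(1)-\phi_2(s)(2)\to0$ while the configuration is no longer symmetric, so $h$ is not identically zero; for $n=4$ this can be checked by hand.
\end{itemize}
Since zeros of a nonzero analytic function are isolated, there is a sequence $s_k\to0$ of one sign with $h(s_k)\ne0$ all of one sign; say $h(s_k)>0$, so endpoint $1$ wins.

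\emph{Conclusion.} The baseline maximizer set is $\{\{1\},\{n\}\}$, and any tie-breaking rule selects one of the two. If the rule selects $\{n\}$, use $L^{(k)}=L(s_k)$. If it selects $\{1\}$, use the reflected family $JL(s_k)J$, which is the Laplacian of the path with edge $\{n-1,n\}$ perturbed instead and for which endpoint $n$ wins. In both cases $\varepsilon_k=\abs{s_k}\,\norm{E}_2=2\abs{s_k}\to0$, the baseline eigengap $\delta_n$ is fixed, and the new maximizer is unique and lies at the opposite end, so $\dmatch(\Sstar,S)=n-1$.

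If some $g$ satisfied (a) and (b), then letting $k\to\infty$ would give $n-1\le g(\varepsilon_k,\delta_n)\to0$, which is a contradiction.
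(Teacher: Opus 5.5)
\textbf{The gap is the step you flag yourself: $h\not\equiv0$ is never proved.}

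Your overall architecture differs from the paper's. You perturb a palindromic path and then have to show the endpoint tie actually breaks. The paper instead prescribes the eigenvector $u(\theta)=a+\theta q$ and solves for positive weights $w_i(\theta)=A_i(\theta)/(u_i(\theta)-u_{i+1}(\theta))$. That pins $\lambda_2\equiv1$ and makes the tie-break explicit, $\abs{u_1(\theta)}-\abs{u_n(\theta)}=2\theta q_1$.

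Several of your steps are fine as written: the antisymmetry of $\phi_2$ by sign-change parity, the telescoping monotonicity, the reflection trick, and $\varepsilon_k=2\abs{s_k}$. But $h\not\equiv0$ carries the entire content of the theorem, and neither of your two routes is carried out.

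Your second route rests on a non sequitur: losing reflection symmetry does not force $\abs{\phi_2(1)}\ne\abs{\phi_2(n)}$. Non-palindromic paths with exactly tied endpoints exist. For example, $u=(3,2,-\tfrac12,-\tfrac32,-3)$ satisfies $Lu=u$ for the path with weights $(3,2,\tfrac92,2)$, so $\lambda_2=1$ and $\abs{u_1}=\abs{u_5}$. That route would also need a singular-perturbation argument for the limiting eigenvector as $s\to\infty$, and an actual computation for every $n\ge4$, not only $n=4$.

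Your first route does close, but ``use antisymmetry'' alone does not exclude cancellation in $h'(0)=2\phi_2(1)\bigl(\phi_2'(1)+\phi_2'(n)\bigr)$. From $\phi_2'=-(L_t-\lambda_2)^{+}E\phi_2$,
\[
\phi_2'(1)+\phi_2'(n)=-\bigl(\phi_2(1)-\phi_2(2)\bigr)\sum_{j\ne2}\frac{\bigl(\phi_j(1)+\phi_j(n)\bigr)\bigl(\phi_j(1)-\phi_j(2)\bigr)}{\lambda_j-\lambda_2},
\]
and two further facts are needed to control this sum.
\begin{itemize}
\item Symmetric vectors have an even number of sign changes and antisymmetric vectors an odd number. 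So each $\phi_j$ of the palindromic $L_t$ is symmetric exactly when $j$ is odd. Only odd $j\ge3$ survive in the sum; $j=1$ drops because $\phi_1$ is constant.
\item The first row of the eigen-equation, $w_1\bigl(\phi_j(1)-\phi_j(2)\bigr)=\lambda_j\phi_j(1)$, fixes the sign of every surviving term.
\end{itemize}
Together these give
\[
h'(0)=-\frac{4\lambda_2\,\phi_2(1)^2}{w_1^2}\sum_{\substack{j\ge3\\ j\ \mathrm{odd}}}\frac{\lambda_j\,\phi_j(1)^2}{\lambda_j-\lambda_2}<0 .
\]
This is strict because the sum is nonempty for $n\ge3$ and a path eigenvector cannot vanish at an endpoint.

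With this in hand you need neither analyticity nor the reflection. For small $\abs{s}$, $h(s)$ has the sign of $-s$. So $s>0$ sends the unique optimum to $\{n\}$ and $s<0$ (still with $w_1+s>0$) sends it to $\{1\}$. This gives movement $n-1$ against either tied reference, for every $n\ge4$.
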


\begin{proof} Assume for the sake of contradiction that there exists a function
$g:(0,\infty)^2\to\mathbb{R}_{\ge0}$ satisfying both (a) and (b).
We construct a sequence of perturbations that violates this property.

Set $n\ge4$. For $1\le i\le n$, define
\[
a_i:=n+1-2i,
\qquad
q_i:=\Bigl(i-\frac{n+1}{2}\Bigr)^2-\frac{n^2-1}{12}.
\]
Then set $u_i(\theta):=a_i+\theta q_i.$ Both $a$ and $q$ have zero sum, the first by antisymmetry $a_{n+1-i}=-a_i$ and the second because $\sum_{i=1}^n\bigl(i-\frac{n+1}2\bigr)^2=\frac{n(n^2-1)}{12}$. Hence $u(\theta)\perp\mathbf1$ for every $\theta$. Write $A_i(\theta):=\sum_{j\le i}u_j(\theta)$
for the partial sums. At $\theta=0$,
$a_i-a_{i+1}=2\;$ for $1\le i\le n-1,$ and $A_i(0)=i(n-i)>0$ for $1\le i\le n-1,$ and 
\begin{gather*}
  a_1=n-1>0>1-n=a_n,\\
  \abs{a_1}=\abs{a_n}=n-1>\max_{2\le i\le n-1}\abs{a_i}=n-3 .
\end{gather*}
$u_i(\theta)$, their consecutive differences, and the partial sums $A_i(\theta)$ depend continuously on $\theta$. Therefore, all strict inequalities that hold at $\theta=0$ also hold for sufficiently small $\abs{\theta}$. And thus, there exists $\theta_0>0$ such that, whenever $\abs{\theta}<\theta_0$:
\begin{enumerate}[label=(\roman*)]
\item $u_1(\theta)>u_2(\theta)>\cdots>u_n(\theta)$;
\item $A_i(\theta)>0$ for $1\le i\le n-1$;
\item $u_1(\theta)>0>u_n(\theta)$;
\item $\max_i\abs{u_i(\theta)}$ is attained only at $i\in\{1,n\}$;
\item $u_1(\theta)$ and $u_n(\theta)$ have the same signs as $a_1$ and $a_n$.
\end{enumerate}
We can take $\theta_0$ to be the minimum of the finitely many positive radii by these continuity arguments.

For $\abs\theta<\theta_0$, define the path edge weights
\[
w_i(\theta):=
\frac{A_i(\theta)}{u_i(\theta)-u_{i+1}(\theta)},
\text{  for } 1\le i\le n-1.
\]
By (i) and (ii), the numerator and denominator are positive, so $w_i(\theta)>0$. Let $L(\theta)$ be the corresponding weighted-path Laplacian. Since $w_i(\theta)\bigl(u_i(\theta)-u_{i+1}(\theta)\bigr)=A_i(\theta)$,
\begin{align*}
  \bigl(L(\theta)u(\theta)\bigr)_1&=u_1(\theta),\\
  \bigl(L(\theta)u(\theta)\bigr)_i&=u_i(\theta)\qquad (2\le i\le n-1),\\
  \bigl(L(\theta)u(\theta)\bigr)_n&=u_n(\theta),
\end{align*}
where the last equality uses $\sum_{j=1}^n u_j(\theta)=0$. Thus $L(\theta)u(\theta)=u(\theta)$, so $u(\theta)$ is an eigenvector with eigenvalue $1$. By (i) and (iii), after deleting a possible zero coordinate, $u(\theta)$ has exactly one sign change. Lemma~\ref{lem:sturm} therefore gives $\lambda_2(L(\theta))=1$ for all $\abs{\theta}<\theta_0$. Since the eigenvalues are simple, $\delta_n:=\lambda_3\bigl(L(0)\bigr)-1>0.$

For $K=2$ the low-frequency projector of $L(\theta)$ is
\begin{gather*}
\Proj(\theta)=\frac1n\mathbf1\,\mathbf1^\top+\widehat u(\theta)\widehat u(\theta)^\top,~ {\rm and ~}
  \widehat u(\theta)=\frac{u(\theta)}{\norm{u(\theta)}_2},
\end{gather*}
because $\lambda_1=0$ has eigenvector $\mathbf1/\sqrt n$ and $\lambda_2=1$ has eigenvector $\widehat u(\theta)$. With one sensor, $m=1\le K$ and
\[
  F(\{i\})=\log\Proj(\theta)_{ii}=\log\Bigl(\tfrac1n+\widehat u_i(\theta)^2\Bigr),
\]
so maximizing $F$ over single vertices is equivalent to maximizing $\abs{u_i(\theta)}$.

At $\theta=0$, the maximizers are $\{1\}$ and $\{n\}$, by $\abs{a_1}=\abs{a_n}>\abs{a_i}$ for interior $i$. For $\theta\ne0$ small, property (v) gives $\abs{u_1(\theta)}=a_1+\theta q_1$ and $\abs{u_n(\theta)}=a_1-\theta q_1$. This then gives
\begin{gather*}
  \abs{u_1(\theta)}-\abs{u_n(\theta)}=2\theta q_1,\\
  q_1=q_n=\frac{(n-1)(n-2)}{6}>0 .
\end{gather*}
By (iv) no interior vertex can be maximal, so the unique maximizer is $\{1\}$ when $\theta>0$ and $\{n\}$ when $\theta<0$. Set $L_t:=L(0)$ (whose baseline eigengap is $\delta_t=\delta_n$). For sufficiently small $\theta\ne0$, $L(\theta)\ne L(0)$: otherwise $u(0)$ and $u(\theta)$ would both lie in the eigenspace of $L(0)$ for the simple eigenvalue $1$, and thus be proportional, forcing $q$ to be a multiple of $a$.

This is impossible because $q\ne0$ is symmetric, $q_{n+1-i}=q_i$, whereas $a\ne0$ is antisymmetric, $a_{n+1-i}=-a_i$. Since each $w_i(\theta)$ is a rational function with nonvanishing denominator on $(-\theta_0,\theta_0)$, $L(\theta)\to L(0)$ in operator norm as $\theta\to0$. Therefore we may choose a sequence $\theta_k<0$ tending to zero such that the positive numbers
\[
  \varepsilon_k:=\norm{L(\theta_k)-L(0)}_2
\]
are strictly decreasing. Then $\varepsilon_k\to 0^+$. Put $L^{(k)}:=L(\theta_k)$. For every $k$, the perturbed objective has the unique maximizer $\{n\}$. Choose the reference maximizer $\Sstar=\{1\}$, since $\{1\}\in\argmax F_t$. Then
\begin{gather*}
  \dmatch(\Sstar,S)=d(1,n)=n-1\\
  \text{ for the unique }S\in\argmax F_{t+1},
\end{gather*}
because $G_{\mathrm{mv}}$ is the unweighted path. Property (b) of a hypothetical $g$ would give $n-1\le g(\varepsilon_k,\delta_n)$ for every $k$, contradicting (a) as $k\to\infty$.

Finally, if a fixed rule selects one maximizer at time $t$, choose the sign of the perturbation opposite to the endpoint selected by the rule; the same construction produces movement $n-1$.
\end{proof}

One should note that the impossibility result does not contradict Theorem~\ref{thm:main}. The construction has two
tied endpoints at the baseline, and the perturbation is chosen to favour the endpoint opposite
the reference. With $L_t=L(0)$ and reference maximizer $\{1\}$, the opposite endpoint $\{n\}$ is
also optimal and lies at matching distance $n-1$, so $\Marg(P)=0$ for every $P<n-1$ and the
hypothesis $\Marg(P)>B_t$ fails exactly when the jump occurs. A positive distance margin at
radius $P$ excludes equally optimal configurations farther than $P$ from the reference.
Theorem~\ref{thm:impossibility} therefore rules out a movement bound that is uniform over graphs
and valid for every reference and new maximizer, including under any fixed baseline
tie-breaking rule; it does not assert a discontinuity at a fixed instance whose baseline optimum
is unique and separated from distant competitors.

\section{Computing the Distance Margin and Certifying Movement}
\label{sec:margin}
We now show how to compute the distance margin $\Marg(P)$ and use it to certify a movement radius via Theorem~\ref{thm:main}.

\subsection{Formulation}
The distance margin is the old optimum minus the value of the best competitor lying farther than
$P$ from the reference; the next proposition records this, together with the monotonicity in $P$
used for certification.
\begin{proposition}[Distance margin representation]
\label{prop:far_optimum}
For every integer $P\ge0$,
\[
\Marg(P)=F_t(\Sstar)-\max_{\substack{\abs S=m\\ \dmatch(S,\Sstar)>P}}F_t(S),
\]
where the maximum is $-\infty$ if the feasible family is empty. Moreover, $\Marg(P)$ is non-decreasing in $P$.
\end{proposition}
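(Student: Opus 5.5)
The representation follows by pulling the constant $F_t(\Sstar)$ out of the minimum in \eqref{eq:margin_def}. Write $\mathcal{F}_P:=\{S:\abs S=m,\ \dmatch(S,\Sstar)>P\}$. Because $F_t(\Sstar)=\log\dstar$ is finite (Lemma~\ref{lem:nondegenerate}), the map $x\mapsto F_t(\Sstar)-x$ is order-reversing on $[-\infty,\infty)$. The minimum over $\mathcal{F}_P$ of $F_t(\Sstar)-F_t(S)$ is therefore $F_t(\Sstar)-\max_{\mathcal{F}_P}F_t(S)$. This is a minimum over a finite family, so it is attained.

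Two degenerate cases need separate treatment.

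\emph{Empty family.} If $\mathcal{F}_P=\emptyset$, then by convention $\Marg(P)=+\infty$. With the stated convention $\max_{\emptyset}=-\infty$, the right-hand side is $F_t(\Sstar)-(-\infty)=+\infty$, so the two sides agree.

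\emph{Singular competitors.} A competitor may have $F_t(S)=-\infty$. Then $F_t(\Sstar)-F_t(S)=+\infty$, and the identity holds in the extended reals. If every member of $\mathcal{F}_P$ is singular, both sides equal $+\infty$.

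For monotonicity, take $P\le P'$. If $\dmatch(S,\Sstar)>P'$, then $\dmatch(S,\Sstar)>P$, so $\mathcal{F}_{P'}\subseteq\mathcal{F}_P$. A maximum over a subfamily is no larger: $\max_{\mathcal{F}_{P'}}F_t\le\max_{\mathcal{F}_P}F_t$. This inequality still holds when $\mathcal{F}_{P'}$ is empty, since its maximum is then $-\infty$. Subtracting from the finite constant $F_t(\Sstar)$ reverses the inequality and gives $\Marg(P)\le\Marg(P')$.

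There is no real obstacle in this argument. The only point requiring care is the bookkeeping in the extended reals, which is exactly the empty-family and $-\infty$ handling above.
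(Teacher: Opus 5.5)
Your proof is correct and follows the paper's argument: you pull the finite constant $F_t(\Sstar)$ out of the minimum over the same finite family, and you get monotonicity from the nesting of the feasible families as $P$ grows. The extended-real bookkeeping you add (the empty family, singular competitors, and finiteness of $F_t(\Sstar)$ via Lemma~\ref{lem:nondegenerate}) makes explicit what the paper leaves implicit.
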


\begin{proof}
The identity follows from
\[
\min_S\bigl(c-F_t(S)\bigr)
=
c-\max_S F_t(S)
\]
over the same finite family, with $c=F_t(\Sstar)$. As $P$ increases, the constraint $\dmatch(S,\Sstar)>P$ becomes more restrictive, so the feasible family shrinks and the maximum cannot increase. This implies that $\Marg(P)$ cannot decrease.
\end{proof}

\begin{proposition}[Perfect-matching test]
\label{prop:matching_test}
Let $\Sstar=\{q_1,\dots,q_m\}$ and $S=\{s_1,\dots,s_m\}$. Construct a bipartite graph with left vertices $q_i$, right vertices $s_j$, and an edge $(q_i,s_j)$ whenever $d(q_i,s_j)\le P$. Then
\[
\dmatch(\Sstar,S)\le P
\]
if and only if this bipartite graph has a perfect matching.
\end{proposition}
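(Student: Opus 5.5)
The plan is to unwind Definition~\ref{def:matching_distance} directly: bijections $\pi$ between $\Sstar$ and $S$ correspond to perfect matchings in the complete bipartite graph on left vertices $q_1,\dots,q_m$ and right vertices $s_1,\dots,s_m$. The bipartite graph in the statement is the subgraph of this complete graph that keeps only the pairs at graph distance at most $P$ in $G_{\mathrm{mv}}$. The claim should therefore reduce to saying that some bijection has bottleneck at most $P$ exactly when some perfect matching uses only retained edges.

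For the forward direction, I would assume $\dmatch(\Sstar,S)\le P$. The minimum in \eqref{eq:dmatch_def} is attained because there are finitely many bijections, as already noted in the proof of Proposition~\ref{prop:matching_metric}. So I choose a bijection $\pi$ with $\max_i d(q_i,s_{\pi(i)})\le P$. Then every pair $(q_i,s_{\pi(i)})$ is an edge of the bipartite graph. Since $\pi$ is a bijection, these $m$ edges are pairwise disjoint and cover both sides, so they form a perfect matching.

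For the reverse direction, I would take a perfect matching $\mathcal M$. Because both sides have exactly $m$ vertices, each $q_i$ is matched to a unique $s_j$ and each $s_j$ is used exactly once. This defines a bijection $\pi$ on $\{1,\dots,m\}$ by setting $\pi(i)=j$ when $(q_i,s_j)\in\mathcal M$. Every matched edge satisfies $d(q_i,s_{\pi(i)})\le P$, so the maximum over $i$ is at most $P$. Hence the minimum over bijections is also at most $P$.

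There is no real obstacle here; the only point needing care is bookkeeping. The vertices of $\Sstar$ and $S$ must be treated as distinct left and right copies even when $\Sstar\cap S\neq\emptyset$, and a shared vertex $v$ then gives the edge $(v,v)$ of distance $0$. I would also remark that the test can be run with Hopcroft--Karp, and that minimizing over $P$ recovers $\dmatch$ itself as a bottleneck assignment value.
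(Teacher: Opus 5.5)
Your proof is correct and follows the paper's argument: in both directions you identify perfect matchings with bijections whose bottleneck is at most $P$, and in the forward direction you use that the minimum over the finitely many bijections is attained. Your remark about treating $\Sstar$ and $S$ as disjoint left and right copies when they share vertices is useful bookkeeping that the paper leaves implicit.
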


\begin{proof}If the bipartite graph has a perfect matching, then there is a bijection $\pi$ such that $d(q_i,s_{\pi(i)})\le P$ for every $i$. Thus, by Definition~\ref{def:matching_distance}, $\dmatch(\Sstar,S)\le P$.

Conversely, if $\dmatch(\Sstar,S)\le P$, then since there are finitely many bijections between $\Sstar$ and $S$, the minimum in Definition~\ref{def:matching_distance} is attained by some bijection $\pi$. Thus we have $d(q_i,s_{\pi(i)})\le P$ for every $i$, so the corresponding edges form a perfect matching in the bipartite graph.\end{proof}

This is a bipartite perfect-matching problem, which can be solved in $O(m^{2.5})$ time in the worst case \cite{lovasz2009matching}. The distance $\dmatch$ is a bottleneck assignment problem \cite{burkard2009assignment} and can be found by binary search over the $O(m^2)$ distinct pairwise distances.
\subsection{Computing the Distance Margin by Enumeration}
When all $\binom nm$ configurations can be enumerated, Proposition~\ref{prop:far_optimum} gives the following exact algorithm.
\begin{center}
\small
\begin{tabular}{@{}p{0.35cm}p{7.0cm}@{}}
\toprule
\multicolumn{2}{@{}p{7.4cm}@{}}{\textbf{Algorithm 1}: exact evaluation of $\Marg(P)$ (undersampled case)} \\
\midrule
1. & Form $\Proj_t$. \\
2. & For each $m$-subset $S$, evaluate $F_t(S)=\log\det(\Proj_t)_{SS}$, with $-\infty$ if singular. \\
3. & Record $F_t(\Sstar)=\max_SF_t(S)$ and fix a reference maximizer $\Sstar$. \\
4. & For each $S$, decide $\dmatch(S,\Sstar)>P$ by the perfect-matching test of Proposition~\ref{prop:matching_test}. \\
5. & Return $\Marg(P)=F_t(\Sstar)-\max\{F_t(S):\dmatch(S,\Sstar)>P\}$, or $+\infty$ if no such $S$ exists. \\
\bottomrule
\end{tabular}
\end{center}
If $\Proj_t$ is formed explicitly, preprocessing costs $O(n^2K)$. All pairs unweighted distances can be computed by $n$ breadth-first searches in $O\bigl(n(n+\abs{E_{\mathrm{mv}}})\bigr)$ time. Then, total enumeration will cost $O\bigl(\binom nm(m^3+m^{2.5})\bigr)$. The illustrations in Section~\ref{sec:experiments} apply Algorithm~1 by exhaustive numerical
evaluation: graph distances and matching radii are exact integers, while objective values are
computed in double precision.

\section{Losslessness of $P$-Hop and Voronoi Restrictions}
\label{sec:bridge}

Computing the distance margin by exhaustive enumeration requires $\binom{n}{m}$ objective evaluations, each costing $O(m^3)$ for the log-determinant, in addition to the matching tests above. Thus the exact far-set optimization in Proposition~\ref{prop:far_optimum} is a combinatorial subset-selection problem and exhaustive evaluation is practical only for moderate finite instances. The role of this section is therefore also computational: when the distance--margin condition holds, a $P$-hop restriction retains a global optimum and permits the subsequent optimization to be confined to a substantially smaller local feasible region.

Theorem~\ref{thm:main} places every new maximizer in the $P$-hop matching ball. We now give conditions under which these restrictions preserve the optimum for the local feasible regions used in dynamic placement.

\begin{theorem}[$P$-hop bridge]
\label{thm:bridge}
Assume the hypotheses of Theorem~\ref{thm:main} and $\Marg(P)>B_t$. Then every $S\in\argmax F_{t+1}$ satisfies
\[
\dmatch(\Sstar,S)\le P,
\]
and
\begin{equation}
  \max_{\substack{\abs S=m\\ \dmatch(S,\Sstar)\le P}}F_{t+1}(S)
  =
  \max_{\abs S=m}F_{t+1}(S).
  \label{eq:bridge_values}
\end{equation}
Moreover,
\[
\argmax_{\substack{\abs S=m\\ \dmatch(S,\Sstar)\le P}}F_{t+1}(S)
=
\argmax F_{t+1};
\]
if the global maximizer is unique, then the constrained and unconstrained maximizers agree.
\end{theorem}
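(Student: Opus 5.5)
The first claim is Theorem~\ref{thm:main}(b) applied at the given radius. Under the hypotheses of Theorem~\ref{thm:main} and $\Marg(P)>B_t$, every $S\in\argmax F_{t+1}$ satisfies $\dmatch(\Sstar,S)\le P$, and nothing new is needed there. The rest of the statement is a set-theoretic consequence of this containment. Write $\mathcal B_P:=\{S:\abs S=m,\ \dmatch(S,\Sstar)\le P\}$ for the matching ball. Theorem~\ref{thm:main}(b) says precisely that $\argmax F_{t+1}\subseteq\mathcal B_P$.

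For the value identity \eqref{eq:bridge_values}, I would argue by two inequalities. Since $\mathcal B_P$ is a subfamily of all $m$-subsets, the constrained maximum is at most the global one. Conversely, $\argmax F_{t+1}$ is nonempty because the family of $m$-subsets is finite. Pick any $S_0\in\argmax F_{t+1}$; it lies in $\mathcal B_P$, so the constrained maximum is at least $F_{t+1}(S_0)=\max_{\abs S=m}F_{t+1}(S)$. The family $\mathcal B_P$ is also nonempty, since it contains $\Sstar$, so the constrained maximum is well defined.

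For the argmax identity, I would use \eqref{eq:bridge_values}. A set $S\in\mathcal B_P$ attains the constrained maximum if and only if $F_{t+1}(S)$ equals the global maximum, that is, if and only if $S\in\argmax F_{t+1}$. Hence the constrained argmax equals $\mathcal B_P\cap\argmax F_{t+1}$. By the containment from Theorem~\ref{thm:main}(b), this intersection is all of $\argmax F_{t+1}$. The uniqueness statement is then immediate: if $\argmax F_{t+1}=\{S^\sharp\}$, the constrained argmax is also $\{S^\sharp\}$.

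One point needs care, and it is the only potential obstacle. The value $-\infty$ could in principle cause trouble when comparing maxima. However, the global maximum of $F_{t+1}$ is finite, because the proof of Theorem~\ref{thm:main} shows $\det C_{t+1}(\Sstar)>0$, so comparisons of extended reals cause no issue. I do not expect any genuine difficulty beyond this, since the whole statement reduces to the containment $\argmax F_{t+1}\subseteq\mathcal B_P$ already established.
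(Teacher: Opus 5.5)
Your proposal is correct and follows the paper's proof: both take the containment $\argmax F_{t+1}\subseteq\{S:\dmatch(S,\Sstar)\le P\}$ from Theorem~\ref{thm:main}(b), get the value identity because the constrained family lies inside all $m$-sets yet contains a global maximizer, and get the argmax identity because a constrained configuration attains that common value exactly when it is a global maximizer. Your added checks on nonemptiness and on finiteness of the maximum via $C_{t+1}(\Sstar)\succ0$ are valid, though not strictly needed.
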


\begin{proof}
Theorem~\ref{thm:main} (b) shows that every global maximizer of $F_{t+1}$ satisfies $\dmatch(S,\Sstar)\le P$. Hence the constrained family contains every global maximizer, while also being a subfamily of all $m$-sets. Its maximum therefore equals the global optimal value, which proves \eqref{eq:bridge_values}. A configuration in the constrained family attains this value if and only if it is a global maximizer, which proves the equality of the two argmax sets.
\end{proof}

\begin{proposition}[Local restrictions can lose the optimum without separation]
\label{prop:bridge_sharp}
For every integer $P\ge0$ and every $\varepsilon_0>0$, there exist connected weighted-graph Laplacians $L_t$ and $L_{t+1}$ on a common path support, with $K=2$ and $m=1$, and a reference maximizer $\Sstar$, such that
\[
\norm{L_{t+1}-L_t}_2<\varepsilon_0,
\qquad
\Marg(P)=0,
\]
while
\[
\max_{\substack{\abs S=1\\ \dmatch(S,\Sstar)\le P}}F_{t+1}(S)
<
\max_{\abs S=1}F_{t+1}(S).
\]
Moreover, the perturbation may be chosen small enough that both smallness conditions of Theorem~\ref{thm:main} hold. Thus, when the distance-margin condition fails, a $P$-hop restriction can lose the global optimum even under arbitrarily small perturbations.
\end{proposition}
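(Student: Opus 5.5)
We reuse the weighted-path construction from the proof of Theorem~\ref{thm:impossibility}, taking $n\ge\max\{4,P+2\}$ so that the two endpoints are more than $P$ hops apart ($d(1,n)=n-1>P$). The baseline is $L_t:=L(0)$ with $K=2$ and $m=1$. At this baseline the maximizers of $F_t$ are exactly $\{1\}$ and $\{n\}$. We take the reference maximizer $\Sstar=\{1\}$. Since $\{n\}$ is also optimal and $\dmatch(\{n\},\Sstar)=n-1>P$, Definition~\ref{def:margin} gives $\Marg(P)\le F_t(\{1\})-F_t(\{n\})=0$. The margin is nonnegative because $\Sstar$ is a maximizer, so $\Marg(P)=0$.

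For the perturbation we set $L_{t+1}:=L(\theta)$ with $\theta<0$ small. The proof of Theorem~\ref{thm:impossibility} shows three things for such $\theta$: $L(\theta)$ is a positively weighted path Laplacian on the same support, $L(\theta)\to L(0)$ in operator norm, and the unique maximizer of $F_{t+1}$ is $\{n\}$. We therefore choose $\abs{\theta}$ small enough that $\norm{L_{t+1}-L_t}_2<\varepsilon_0$. For $m=1$ the constrained family $\{S:\dmatch(S,\Sstar)\le P\}$ is the set of singletons $\{i\}$ with $i-1\le P$, and it does not contain $n$. Because $\{n\}$ is the unique global maximizer, every member of the constrained family has a strictly smaller $F_{t+1}$ value. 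There are finitely many such members, so the constrained maximum is strictly less than the global maximum, which is the claimed inequality.

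It remains to check the smallness conditions \eqref{eq:smallness}. At the reference we have $C_t(\Sstar)=\Proj(0)_{11}=\tfrac1n+\widehat u_1(0)^2>0$, so $\lstar=\dstar>0$ are fixed positive numbers independent of $\theta$. The baseline eigengap $\delta_t=\lambda_3(L(0))-\lambda_2(L(0))=\delta_n>0$ is also fixed. Let $\varepsilon:=\norm{L(\theta)-L(0)}_2$. Once $\varepsilon<\delta_n/4$, Lemma~\ref{lem:dk} gives $\rho_t\le 4\varepsilon/\delta_n$, and we take $r:=4\varepsilon/\delta_n$. This $r$ tends to $0$ with $\theta$, so $\eta_{t,1}=2mr/\lstar\to0$. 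Both conditions $r\le\lstar/2$ and $r\le\dstar e^{-\eta_{t,1}}/2$ therefore hold for all sufficiently small $\abs{\theta}$. Intersecting this finite set of smallness requirements on $\abs\theta$ with $\varepsilon<\varepsilon_0$ completes the argument.

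The main obstacle is bookkeeping rather than substance. One point is that Lemma~\ref{lem:dk} and the spectral projector are stated in terms of the $K$-th gap $\lambda_{K+1}-\lambda_K$, which for $K=2$ is exactly $\delta_n$; this is what makes the projector drift bound apply. The other is that the distance $n-1$ must exceed $P$, which is handled by choosing $n$ large depending on $P$. Consistent with Theorem~\ref{thm:main}(b), the implication there is vacuous here because $\Marg(P)=0<B_t$.
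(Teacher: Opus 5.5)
Your proposal is correct and follows essentially the same route as the paper. You reuse the path family $L(\theta)$ with $n\ge\max\{4,P+2\}$ and take $\Sstar=\{1\}$, so that the tied endpoint $\{n\}$ forces $\Marg(P)=0$; a small $\theta<0$ then makes $\{n\}$ the unique maximizer outside the $P$-hop ball, and your explicit choice $r=4\varepsilon/\delta_n$ via Lemma~\ref{lem:dk} spells out the paper's remark that the projector drift vanishes at fixed baseline eigengap.
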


\begin{proof}
Fix $P\ge0$ and choose
\[
n\ge\max\{4,P+2\}.
\]
Use the weighted-path family $L(\theta)$ constructed in Theorem~\ref{thm:impossibility}. At $\theta=0$, the one-sensor objective with $K=2$ has both endpoints $\{1\}$ and $\{n\}$ as maximizers. Set
\[
L_t:=L(0),
\qquad
\Sstar:=\{1\}.
\]
Since
\[
\dmatch(\{1\},\{n\})=n-1>P
\]
and $\{n\}$ is also a time-$t$ maximizer, it is feasible in the minimum defining $\Marg(P)$ and has zero objective loss. Hence
\[
\Marg(P)=0.
\]

For sufficiently small $\theta<0$, Theorem~\ref{thm:impossibility} shows that $\{n\}$ is the unique maximizer of the perturbed objective. Because $L(\theta)\to L(0)$ in operator norm as $\theta\to0$, we may choose such a $\theta$ with
\[
\norm{L(\theta)-L(0)}_2<\varepsilon_0.
\]
Set $L_{t+1}:=L(\theta)$. The quantities $\lstar$ and $\dstar$ at the fixed baseline are positive, whereas, by Lemma~\ref{lem:dk}, the projector drift tends to zero with $\theta$ because the
baseline eigengap $\delta_n$ is fixed. Therefore, by choosing $\abs{\theta}$ smaller if necessary, both smallness conditions of Theorem~\ref{thm:main} also hold.

Now $\{n\}$ lies outside the $P$-hop matching ball around $\Sstar$, since $n-1>P$. Because it is the unique global maximizer at time $t+1$, excluding it strictly lowers the best attainable objective:
\[
\max_{\substack{\abs S=1\\ \dmatch(S,\Sstar)\le P}}F_{t+1}(S)
<
F_{t+1}(\{n\})
=
\max_{\abs S=1}F_{t+1}(S).
\]
Thus the $P$-hop restriction genuinely loses the global optimum. \end{proof}

\subsection{Intersection with the Voronoi Region}
Nomura et al.~\cite{nomura2024dynamic} restrict sensor $i$ to the strict Voronoi cell of its current position, intersected with a $P$-hop neighborhood. We define this region and the conditions under which it retains a global optimum. Fix an ordering $\Sstar=\{q_1,\dots,q_m\}$ and define, on the fixed movement graph $G_{\mathrm{mv}}$,
\[
W_i^{<}:=
\bigl\{x\in V:\ d(x,q_i)<d(x,q_j)\ \text{for all }j\ne i\bigr\}.
\]

The strict cells are pairwise disjoint, but a vertex equidistant from two of the $q_i$ will belong to none of them. Define the corresponding local feasible family by
\begin{multline*}
V_P
:=
\bigl\{
\{x_1,\dots,x_m\}:
x_i\in W_i^{<}
\text{ and }\\ d(x_i,q_i)\le P
\text{ for every }i
\bigr\}.
\end{multline*}
Because the strict Voronoi cells are pairwise disjoint, the vertices $x_1,\dots,x_m$ in every element of $V_P$ are distinct.

As discussed in Section~\ref{sec:intro}, these results concern whether the local region contains
a global maximizer of $F_{t+1}$, not whether independent per-sensor updates find it. The regions are also centered at the reference maximizer $\Sstar$; applying them to another time-$t$ configuration requires a separate tracking-error analysis.

\begin{corollary}[Losslessness under an assignment consistent with the Voronoi cells]
\label{cor:voronoi}
Assume the hypotheses of Theorem~\ref{thm:bridge}, let $S=\{r_1,\dots,r_m\}\in\argmax F_{t+1}$, and suppose there is a permutation $\pi$ with
\begin{equation}
\begin{gathered}
  r_{\pi(i)}\in W_i^{<} \quad\text{and}\quad d(q_i,r_{\pi(i)})\le P \quad\text{for every }i.
\end{gathered}
\label{eq:compatible}
\end{equation}
Then $S\in V_P$, and consequently
\[
  \max_{S'\in V_P}F_{t+1}(S')
  =
  \max_{\abs{S'}=m}F_{t+1}(S').
\]
\end{corollary}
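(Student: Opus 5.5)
The plan is to show directly that $S$ belongs to $V_P$ by reading off the membership conditions from \eqref{eq:compatible}, and then to deduce the value identity by the same sandwich argument used in Theorem~\ref{thm:bridge}. No perturbation estimate is needed beyond what Theorem~\ref{thm:bridge} already supplies. The hypotheses of that theorem enter only to guarantee that $S$ is a global maximizer lying in the $P$-hop matching ball. The Voronoi compatibility \eqref{eq:compatible} is then the extra combinatorial input that places $S$ inside the smaller family.

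First, relabel $S$ by setting $x_i:=r_{\pi(i)}$ for $i=1,\dots,m$. Since $\pi$ is a permutation, $\{x_1,\dots,x_m\}=\{r_1,\dots,r_m\}=S$. Condition \eqref{eq:compatible} states exactly that $x_i\in W_i^{<}$ and $d(x_i,q_i)\le P$ for every $i$. These are precisely the defining conditions of $V_P$, so $S\in V_P$. Distinctness of the $x_i$ is automatic because the strict cells are pairwise disjoint, so $S$ is a genuine $m$-subset, and this matches the remark following the definition of $V_P$.

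Second, every element of $V_P$ is an $m$-subset of $V$. Hence
\[
\max_{S'\in V_P}F_{t+1}(S')\le\max_{\abs{S'}=m}F_{t+1}(S').
\]
For the reverse inequality, $S\in V_P$ and $S\in\argmax F_{t+1}$ give
\[
\max_{S'\in V_P}F_{t+1}(S')\ge F_{t+1}(S)=\max_{\abs{S'}=m}F_{t+1}(S').
\]
Together these give equality. $V_P$ is nonempty because it contains $S$, so the maximum is well defined.

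There is no real obstacle here. The only point needing care is the indexing: $\pi$ maps the index of $q_i$ to the index of the new sensor assigned to it, so one must check that the family $\{x_i\}$ exhausts $S$. It is also worth noting that Theorem~\ref{thm:bridge} is used only for the existence of $S\in\argmax F_{t+1}$; the assumed assignment already implies $\dmatch(\Sstar,S)\le P$.
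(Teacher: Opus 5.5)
Your proof is correct and follows the paper's argument: you read off $S\in V_P$ directly from \eqref{eq:compatible} via the relabeling $x_i=r_{\pi(i)}$, then conclude equality of the maxima because $V_P$ is a subfamily of all $m$-sets that contains the global maximizer $S$. One small aside on your closing remark: the hypotheses of Theorem~\ref{thm:bridge} are not needed even for the existence of $S$, since $S$ is given in the statement and a maximizer of $F_{t+1}$ exists anyway because the family of $m$-sets is finite.
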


\begin{proof}
Condition \eqref{eq:compatible} gives $r_{\pi(i)}\in W_i^{<}$ and $d(q_i,r_{\pi(i)})\le P$ for every $i$, so $S\in V_P$. Since $S$ is a global maximizer and $V_P$ is a subfamily of all $m$-sets, maximizing over $V_P$ gives the same optimal value.
\end{proof}

For $m=1$, this condition holds automatically. For $m\ge2$, the spacing condition below is enough.

\begin{proposition}[Compatibility with strict Voronoi cells]
\label{prop:separation}
Assume $m\ge2$. If
\begin{equation}
  \min_{i\ne j}d(q_i,q_j)>2P,
  \label{eq:separation}
\end{equation}
then every vertex $x$ satisfying $d(x,q_i)\le P$ belongs to $W_i^{<}$. So, every bijection with bottleneck radius at most $P$ satisfies
\eqref{eq:compatible}.
\end{proposition}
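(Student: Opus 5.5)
The argument is a direct application of the triangle inequality for the graph distance $d$ on $G_{\mathrm{mv}}$, followed by unwinding the definitions of $W_i^{<}$ and of condition \eqref{eq:compatible}.

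For the first claim, I fix $i$ and a vertex $x$ with $d(x,q_i)\le P$. For any $j\ne i$, the triangle inequality and the spacing hypothesis \eqref{eq:separation} give
\[
d(x,q_j)\ \ge\ d(q_i,q_j)-d(x,q_i)\ >\ 2P-P\ =\ P\ \ge\ d(x,q_i).
\]
So $d(x,q_i)<d(x,q_j)$ for every $j\ne i$, which is exactly the defining condition of the strict cell $W_i^{<}$. The strictness is inherited from the strict inequality in \eqref{eq:separation}. The assumption $m\ge2$ only ensures that the minimum over $i\ne j$ is over a nonempty set; for $m=1$ the cell $W_1^{<}$ is all of $V$ anyway.

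For the second claim, let $S=\{r_1,\dots,r_m\}$ be any $m$-set. Suppose there is a bijection $\pi$ with $\max_i d(q_i,r_{\pi(i)})\le P$; such a bijection exists whenever $\dmatch(\Sstar,S)\le P$, since the minimum in \eqref{eq:dmatch_def} is attained. Applying the first claim to $x=r_{\pi(i)}$ for each $i$ gives $r_{\pi(i)}\in W_i^{<}$. Together with $d(q_i,r_{\pi(i)})\le P$, this is exactly \eqref{eq:compatible}.

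There is no real obstacle here. The only points needing care are noting that the strict inequality in \eqref{eq:separation} is what yields strict Voronoi membership, and stating explicitly that the bottleneck-radius bijection is realized by an actual permutation. Combined with Theorem~\ref{thm:bridge}, this then feeds Corollary~\ref{cor:voronoi}.
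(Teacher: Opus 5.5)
Your proposal is correct and follows the paper's proof essentially verbatim: the triangle-inequality chain $d(x,q_j)\ge d(q_i,q_j)-d(x,q_i)>2P-P\ge d(x,q_i)$ gives strict Voronoi membership, and applying it to each $r_{\pi(i)}$ under a radius-$P$ bijection yields \eqref{eq:compatible}. Your added remark that the bottleneck minimum is attained by an actual permutation is a harmless clarification that the paper handles implicitly.
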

\begin{proof}Let $x\in V$ satisfy $d(x,q_i)\le P$ and let $j\ne i$. By the triangle inequality and \eqref{eq:separation},
\begin{eqnarray*}
  d(x,q_j)&\ge& d(q_i,q_j)-d(x,q_i)\\
  &>&2P-P=P\ge d(x,q_i),
\end{eqnarray*}
so $q_i$ is strictly closer to $x$ than any other $q_j$, that is, $x\in W_i^{<}$. If $\pi$ is a bijection with $d(q_i,r_{\pi(i)})\le P$ for all $i$, then the previous argument gives $r_{\pi(i)}\in W_i^{<}$, which together with $d(q_i,r_{\pi(i)})\le P$ is \eqref{eq:compatible}.\end{proof}
\begin{corollary}[Voronoi restriction losslessness under enough spacing]
\label{cor:voronoi_sep}
Assume the hypotheses of Theorem~\ref{thm:bridge}. If $m=1$, or if $m\ge2$ and the spacing condition \eqref{eq:separation} holds, then $V_P$ contains every $S\in\argmax F_{t+1}$, and exact maximization of $F_{t+1}$ over $V_P$ is lossless in objective value.
\end{corollary}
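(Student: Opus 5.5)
The plan is to combine Theorem~\ref{thm:bridge} with the Voronoi compatibility results; the argument is short, so the main task is to assemble the pieces cleanly. Fix an arbitrary $S=\{r_1,\dots,r_m\}\in\argmax F_{t+1}$. By Theorem~\ref{thm:bridge} (equivalently Theorem~\ref{thm:main}(b) under $\Marg(P)>B_t$), $\dmatch(\Sstar,S)\le P$. Since the minimum in \eqref{eq:dmatch_def} is attained, there is a bijection $\pi$ with $d(q_i,r_{\pi(i)})\le P$ for every $i$. The remaining step is to upgrade this bottleneck bijection to one satisfying \eqref{eq:compatible}, so that Corollary~\ref{cor:voronoi} applies.

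Case $m=1$. Here $\Sstar=\{q_1\}$ and there are no competing centers, so the condition ``$d(x,q_1)<d(x,q_j)$ for all $j\ne1$'' is vacuous and $W_1^{<}=V$. Hence $r_1\in W_1^{<}$ holds automatically, $d(q_1,r_1)\le P$ holds by the previous paragraph, and \eqref{eq:compatible} is satisfied with $\pi$ the identity.

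Case $m\ge2$ with \eqref{eq:separation}. Proposition~\ref{prop:separation} says that every bijection with bottleneck radius at most $P$ satisfies \eqref{eq:compatible}. The $\pi$ obtained above is such a bijection, so \eqref{eq:compatible} holds.

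In both cases Corollary~\ref{cor:voronoi} gives $S\in V_P$. Because $S$ was an arbitrary global maximizer, $V_P$ contains all of $\argmax F_{t+1}$. Since $V_P$ is a subfamily of all $m$-sets that contains a global maximizer, the maximum of $F_{t+1}$ over $V_P$ equals the global maximum, which is the losslessness in objective value.

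The only point requiring care is the $m=1$ case: the strict-cell definition must be read with the vacuous quantifier so that $W_1^{<}=V$. One should also note that $\pi$ indexes the elements of $S$ relative to the fixed ordering of $\Sstar$, so that membership in $V_P$ is checked with $x_i=r_{\pi(i)}$. Everything else is direct invocation of the earlier results.
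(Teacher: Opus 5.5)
Your proposal is correct and follows essentially the same route as the paper: use $W_1^{<}=V$ when $m=1$, and when $m\ge2$ use Theorem~\ref{thm:bridge} to get a bijection of bottleneck radius at most $P$, make it compatible via Proposition~\ref{prop:separation}, and conclude with Corollary~\ref{cor:voronoi}. Your version also spells out two steps the paper leaves implicit: that $d(q_1,r_1)\le P$ in the $m=1$ case comes from Theorem~\ref{thm:bridge}, and that membership in $V_P$ is checked with $x_i=r_{\pi(i)}$.
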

\begin{proof}For $m=1$, $W_1^{<}=V$. For $m\ge2$, Theorem~\ref{thm:bridge} supplies, for each $S\in\argmax F_{t+1}$, a bijection from $\Sstar$ to $S$ with bottleneck radius at most $P$. Proposition~\ref{prop:separation} makes it compatible in the sense of \eqref{eq:compatible}, and Corollary~\ref{cor:voronoi} applies.\end{proof}

A fixed assignment of Voronoi ties does not affect Corollary~\ref{cor:voronoi_sep}, since \eqref{eq:separation} guarantees uniqueness within distance $P$;  condition \eqref{eq:separation} is sufficient but not necessary for Corollary~\ref{cor:voronoi}.
\section{Extensions}
\label{sec:extensions}
We extend the main result to oversampled and regularized D-optimality and then note how learned dictionaries fit directly into the general projector framework without further proofs.
\subsection{Oversampled D-Optimality, $m\ge K$}
\label{subsec:oversampled}
In this subsection $\Sstar$ denotes a maximizer of the oversampled objective $F_t^{\mathrm{ov}}$;
it need not coincide with the reference used for $F_t$ in the undersampled setting.
For every $m$-set $S$ one has $0\preceq H_t(S)\preceq I_K$. Consequently, if $\det H_t(S)>0$ then $\lmin(H_t(S))\ge\det H_t(S)=e^{\Fov(S)}$. Since $0\preceq D_S\preceq I_n$ and $U_t^\top U_t=I_K$, congruence gives $0\preceq U_t^\top D_SU_t\preceq U_t^\top U_t=I_K$, so all eigenvalues lie in $[0,1]$ and the determinant argument of Proposition~\ref{prop:auto} applies.
\begin{lemma}[Oversampled log-determinant stability]
\label{lem:over_logdet}

Let $c_S:=\lmin(H_t(S))>0$ and let $r\ge\rho_t$ satisfy $r\le c_S/4$. Then $H_{t+1}(S)$ is positive definite and
\begin{equation}
  \abs{\Fov(S)-F_{t+1}^{\mathrm{ov}}(S)}\le\frac{4Kr}{c_S}.
  \label{eq:over_logdet}
\end{equation}
\end{lemma}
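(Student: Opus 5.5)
The plan is to reduce the claim to Lemma~\ref{lem:logdet}, applied with $X=H_t(S)$, $Y=H_{t+1}(S)$, $c=c_S$ and dimension $K$ in place of $m$. The only new ingredient is a bound on $\norm{H_t(S)-H_{t+1}(S)}_2$ in terms of $\rho_t$. Unlike the compressed matrices $C_s(S)$, the matrices $H_s(S)=U_s^\top D_SU_s$ depend on the choice of bases $U_s$, not only on the projectors. I will therefore first fix the basis at time $t+1$ conveniently. Since $\det H_{t+1}(S)$ is invariant under $U_{t+1}\mapsto U_{t+1}Q$ for orthogonal $Q$, the objective $F^{\mathrm{ov}}_{t+1}$ does not depend on this choice.

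The key step is to establish $\norm{H_t(S)-H_{t+1}(S)}_2\le 2\rho_t$ for a suitable basis alignment. I would choose $Q$ as the orthogonal polar factor of $U_{t+1}^\top U_t$ (a Procrustes alignment). The standard estimate then gives $\norm{U_t-U_{t+1}Q}_2\le\sqrt2\,\norm{\sin\Theta}_2\le \sqrt2\rho_t$, since $\norm{\sin\Theta}_2=\norm{\Proj_t-\Proj_{t+1}}_2$ for equal-rank projectors (as used in Lemma~\ref{lem:dk}). A cleaner route, and the one I would try first, is a similarity argument. With the polar choice, $M:=U_t^\top U_{t+1}Q$ is symmetric positive semidefinite with singular values $\cos\theta_i$. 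Write
\[
H_t(S)-(U_{t+1}Q)^\top D_S(U_{t+1}Q)=U_t^\top D_S U_t-W^\top D_S W,\qquad W:=U_{t+1}Q.
\]
Decompose $W=U_tM+(I-\Proj_t)W$ and expand. The cross and quadratic terms are bounded by $\norm{(I-\Proj_t)W}_2\le\rho_t$. The term $U_t^\top D_SU_t-MU_t^\top D_SU_tM$ is bounded by $\norm{I-M}_2(1+\norm{M}_2)\le 2(1-\cos\theta_{\max})\le 2\rho_t^2$. Altogether the total is at most $2\rho_t+O(\rho_t^2)$, which is $\le 2\rho_t$ up to constants since $\rho_t\le 1$.

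Granting $\norm{H_t(S)-H_{t+1}(S)}_2\le 2r$, the hypothesis $r\le c_S/4$ yields $2r\le c_S/2$. Lemma~\ref{lem:logdet} (with $m$ replaced by $K$) then gives $H_{t+1}(S)\succ0$ and $\abs{\Fov(S)-F^{\mathrm{ov}}_{t+1}(S)}\le 2K(2r)/c_S=4Kr/c_S$, which is exactly \eqref{eq:over_logdet}.

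The main obstacle is obtaining the constant $2$ in the basis-alignment bound without stray second-order terms. The crude expansion above gives $2\rho_t+\rho_t^2$-type terms, so I expect to need a sharper organisation. I would first try the bound $\norm{A^\top D A-B^\top D B}_2\le\norm{A-B}_2(\norm A_2+\norm B_2)\le 2\norm{U_t-U_{t+1}Q}_2$. If that only yields $2\sqrt2\rho_t$, I would compare instead the nonzero spectra: the nonzero eigenvalues of $H_s(S)$ coincide with those of $D_S\Proj_sD_S=R_S^\top C_s(S)R_S$, and Lemma~\ref{lem:compression} gives $\norm{C_t(S)-C_{t+1}(S)}_2\le\rho_t$. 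Combined with eigenvalue interlacing (Weyl) for the $K$ nonzero eigenvalues, this would directly give $\abs{\lambda_i(H_t(S))-\lambda_i(H_{t+1}(S))}\le\rho_t$. One could then sum $\abs{\log(1+x)}\le 2\abs x$ over the $K$ eigenvalues, with $c_S/4$ providing ample slack.
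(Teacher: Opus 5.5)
Your primary route and your first fallback do not prove the lemma as stated. Lemma~\ref{lem:logdet} requires $\norm{Y-X}_2\le c_S/2$ and returns $2K\norm{Y-X}_2/c_S$. To reach exactly $4Kr/c_S$ from the hypothesis $r\le c_S/4$ you therefore need $\norm{H_t(S)-Q^\top H_{t+1}(S)Q}_2\le 2r$ with no slack at all. The step ``$2\rho_t+O(\rho_t^2)\le 2\rho_t$ up to constants'' is simply false, and any excess both breaks the hypothesis check and changes the constant in \eqref{eq:over_logdet}. The product bound does not rescue this. With the polar alignment, $\norm{U_t-U_{t+1}Q}_2=2\sin(\theta_{\max}/2)$, so $2\norm{U_t-U_{t+1}Q}_2=4\sin(\theta_{\max}/2)$ strictly exceeds $2\sin\theta_{\max}=2\rho_t$ whenever $\theta_{\max}>0$. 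The basis-alignment approach should be dropped.

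Your last fallback, however, is a complete and correct proof, and it differs from the paper's in a useful way. The paper lifts to $A_s:=\Proj_sD_S\Proj_s=U_sH_s(S)U_s^\top$, whose nonzero spectrum is that of $H_s(S)$. It expands $A_t-A_{t+1}=(\Proj_t-\Proj_{t+1})D_S\Proj_t+\Proj_{t+1}D_S(\Proj_t-\Proj_{t+1})$ to get $\norm{A_t-A_{t+1}}_2\le 2\rho_t$, applies Weyl, and then uses $\abs{x_j}\le 2r/c_S\le 1/2$ with \eqref{eq:scalar_log} to obtain $4Kr/c_S$.

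You instead use $C_s(S)=U_{s,S}U_{s,S}^\top$ (equivalently $D_S\Proj_sD_S$). It has the same nonzero spectrum as $H_s(S)=U_{s,S}^\top U_{s,S}$, since $\Proj D_S\Proj=(D_S\Proj)^\top(D_S\Proj)$ while $D_S\Proj D_S=(D_S\Proj)(D_S\Proj)^\top$. Its drift, however, is bounded by Lemma~\ref{lem:compression} with constant $1$ instead of $2$. All eigenvalues are nonnegative and $m\ge K$ (forced by $c_S>0$, since $U_{t,S}$ must have rank $K$). Hence, for both $s$, the sorted spectrum of $C_s(S)$ is that of $H_s(S)$ followed by $m-K$ zeros, and Weyl gives $\abs{\lambda_j(H_t(S))-\lambda_j(H_{t+1}(S))}\le\rho_t$ for $j\le K$. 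You should state explicitly that $\lambda_K(H_{t+1}(S))\ge c_S-r>0$ yields positive definiteness; you left this implicit. Then $\abs{x_j}\le r/c_S\le 1/4$, and the sum is at most $2Kr/c_S\le 4Kr/c_S$.

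So your route proves the lemma with half the constant, under the weaker condition $r\le c_S/2$. It also makes the basis dependence you worried about at the outset irrelevant, since only spectra enter. Carried through Theorem~\ref{thm:over}, it would halve the threshold $12Kr/\dstarov$ to $6Kr/\dstarov$, matching the structure of the undersampled case.
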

\begin{proof}Set $A_t:=\Proj_tD_S\Proj_t=U_tH_t(S)U_t^\top$, so the nonzero eigenvalues of $A_t$ are the same as those of $H_t(S)$, and likewise at time $t+1$. From
\[
  A_t-A_{t+1}=(\Proj_t-\Proj_{t+1})D_S\Proj_t+\Proj_{t+1}D_S(\Proj_t-\Proj_{t+1})
\]
and $\norm{D_S}_2\le1$, $\norm{\Proj_t}_2=\norm{\Proj_{t+1}}_2=1$ we get $\norm{A_t-A_{t+1}}_2\le2\rho_t\le2r$. Order eigenvalues nonincreasingly. By Weyl's inequality, $\abs{\lambda_j(A_{t+1})-\lambda_j(A_t)}\le2r$ for all $j$. For $1\le j\le K$ we have $\lambda_j(A_t)\ge c_S$ and $r\le c_S/4$, so $2r\le c_S/2$, so $\lambda_j(A_{t+1})\ge c_S/2>0.$ Since $\operatorname{rank}A_{t+1}\le K$, these are exactly the nonzero eigenvalues of $A_{t+1}$, hence the eigenvalues of $H_{t+1}(S)$, which is thus positive definite. Setting
\[  x_j:=\frac{\lambda_j(A_{t+1})-\lambda_j(A_t)}{\lambda_j(A_t)}, \text{ and }\abs{x_j}\le\frac{2r}{c_S}\le\frac12,\]
and using $\det H_t(S)=\prod_{j\le K}\lambda_j(A_t)$, $\det H_{t+1}(S)=\prod_{j\le K}\lambda_j(A_{t+1})$ together with \eqref{eq:scalar_log},
\begin{eqnarray*}
  \abs{F_{t+1}^{\mathrm{ov}}(S)-\Fov(S)}&=&\Bigl|\sum_{j=1}^K\log(1+x_j)\Bigr| \\
  &\le&2\sum_{j=1}^K\abs{x_j}\le\frac{4Kr}{c_S}.
\end{eqnarray*} This proves \eqref{eq:over_logdet}.\end{proof}

Since only $\norm{\Proj_t-\Proj_{t+1}}_2$ enters, Lemma~\ref{lem:over_logdet} also holds with
the times $t$ and $t+1$ exchanged, that is, with $c_S:=\lmin(H_{t+1}(S))$.
 Let $\Sstar\in\argmax\Fov$, set $\dstarov:=\det H_t(\Sstar)$, and define the oversampled distance margin by
\[
  \MargOver(P):=\min_{\substack{\abs S=m\\ \dmatch(S,\Sstar)>P}}\bigl[\Fov(\Sstar)-\Fov(S)\bigr].
\]

Since $U_t$ has rank $K$, it has $K$ linearly independent rows. For $m\ge K$, include those rows in an $m$-set $S$; then $H_t(S)\succ0$. Therefore $F_t^{\mathrm{ov}}$ has a finite maximum, and every reference maximizer has positive determinant.

\begin{theorem}[Movement bound in the oversampled case]
\label{thm:over}

Assume $m\ge K$ and Assumption~\ref{ass:movement}. Let $r\ge\rho_t$ satisfy
\begin{equation}
\begin{gathered}
  r\le\frac{\dstarov}{8},~ {\rm and}~
  \etaov:=\frac{4Kr}{\dstarov}\le\log2 .
\end{gathered}
\label{eq:over_smallness}
\end{equation}
Then every $S\in\argmax F_{t+1}^{\mathrm{ov}}$ satisfies
\[
\Fov(\Sstar)-\Fov(S)\le3\etaov.
\]
Consequently, for every integer $P\ge0$, one has that
\begin{equation}
\begin{gathered}
  \MargOver(P)>3\etaov=\frac{12Kr}{\dstarov}\\
  \Longrightarrow
  \dmatch(\Sstar,S)\le P
  \quad\\
  \text{for every }S\in\argmax F_{t+1}^{\mathrm{ov}}.
\end{gathered}
\label{eq:over_implication}
\end{equation}
Under Assumption~\ref{ass:spectral} one may take $r=4\varepsilon/\delta$, in which case the hypotheses read $4\varepsilon/\delta\le\dstarov/8$ and $16K\varepsilon/(\dstarov\delta)\le\log2$, and the threshold becomes $48K\varepsilon/(\dstarov\delta)$.
\end{theorem}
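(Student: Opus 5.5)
The plan mirrors the three-step proof of Theorem~\ref{thm:main}, with Lemma~\ref{lem:over_logdet} in place of Lemma~\ref{lem:logdet} and the oversampled determinant bound $\lmin(H_s(S))\ge\det H_s(S)$ (noted at the start of Section~\ref{subsec:oversampled}) in place of Proposition~\ref{prop:auto}.

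\emph{Step 1: control at the reference.} Put $c_{\Sstar}:=\lmin(H_t(\Sstar))\ge\dstarov>0$. The first condition in \eqref{eq:over_smallness} gives $r\le\dstarov/8\le c_{\Sstar}/4$. Lemma~\ref{lem:over_logdet} then yields
\[
\abs{\Fov(\Sstar)-F^{\mathrm{ov}}_{t+1}(\Sstar)}\le 4Kr/c_{\Sstar}\le\etaov.
\]

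\emph{Step 2: conditioning of a new maximizer.} Take $S\in\argmax F^{\mathrm{ov}}_{t+1}$. By optimality and Step 1,
\[
F^{\mathrm{ov}}_{t+1}(S)\ge\log\dstarov-\etaov .
\]
Since $\etaov\le\log2$, this gives $\det H_{t+1}(S)\ge\dstarov/2>0$. Because $0\preceq H_{t+1}(S)\preceq I_K$, the determinant bounds the smallest eigenvalue, so $c'_S:=\lmin(H_{t+1}(S))\ge\dstarov/2$.

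\emph{Step 3: transfer back to time $t$.} We need $r\le c'_S/4$, which holds because $c'_S/4\ge\dstarov/8\ge r$. This is exactly why the constant $8$ appears in \eqref{eq:over_smallness}. Apply Lemma~\ref{lem:over_logdet} with the roles of $t$ and $t+1$ exchanged, as the remark after that lemma permits:
\[
\abs{\Fov(S)-F^{\mathrm{ov}}_{t+1}(S)}\le 4Kr/c'_S\le 8Kr/\dstarov=2\etaov .
\]

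\emph{Conclusion.} Telescope exactly as in Theorem~\ref{thm:main}:
\[
\Fov(\Sstar)-\Fov(S)\le \etaov + 0 + 2\etaov=3\etaov=12Kr/\dstarov .
\]
The movement implication \eqref{eq:over_implication} follows by contradiction. If $\dmatch(\Sstar,S)>P$, then $S$ is feasible in the minimum defining $\MargOver(P)$, so its loss would be at least $\MargOver(P)>3\etaov$, contradicting the bound just obtained.

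\emph{Laplacian case.} Under Assumption~\ref{ass:spectral}, Lemma~\ref{lem:dk} gives $\rho_t\le4\varepsilon/\delta$, so $r=4\varepsilon/\delta$ is admissible. Substituting:
\begin{itemize}
\item $r\le\dstarov/8$ becomes $4\varepsilon/\delta\le\dstarov/8$;
\item $\etaov\le\log2$ becomes $16K\varepsilon/(\dstarov\delta)\le\log2$;
\item the threshold $12Kr/\dstarov$ becomes $48K\varepsilon/(\dstarov\delta)$.
\end{itemize}

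The only delicate point is Step 3. There, the reversed application of Lemma~\ref{lem:over_logdet} needs the eigenvalue lower bound at time $t+1$, and the factor $e^{\etaov}\le2$ must be tracked so that both the hypothesis $r\le c'_S/4$ and the constant $2\etaov$ come out as stated.
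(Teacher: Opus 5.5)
Your proposal is correct and follows the paper's proof essentially step for step. Both arguments apply Lemma~\ref{lem:over_logdet} at $\Sstar$ using $\lmin(H_t(\Sstar))\ge\dstarov$, then use the determinant-to-eigenvalue bound to get $\lmin(H_{t+1}(S))\ge\dstarov/2$, then apply the time-reversed lemma to obtain $2\etaov$, and finish with the three-term telescoping, the margin contradiction, and the substitution $r=4\varepsilon/\delta$.
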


 \begin{proof}
Since $0\preceq H_t(\Sstar)\preceq I_K$, we have $\lmin(H_t(\Sstar))\ge\det H_t(\Sstar)=\dstarov$,
so $r\le\dstarov/8\le\lmin(H_t(\Sstar))/4$. Lemma~\ref{lem:over_logdet} at $S=\Sstar$ gives
\[
\abs{F_{t+1}^{\mathrm{ov}}(\Sstar)-\Fov(\Sstar)}\le\frac{4Kr}{\dstarov}=\etaov .
\]
Let $S\in\argmax F_{t+1}^{\mathrm{ov}}$. By optimality and this bound,
$F_{t+1}^{\mathrm{ov}}(S)\ge F_{t+1}^{\mathrm{ov}}(\Sstar)\ge\log\dstarov-\etaov$, so
$\det H_{t+1}(S)\ge\dstarov e^{-\etaov}\ge\dstarov/2$ because $\etaov\le\log2$. Since
$0\preceq H_{t+1}(S)\preceq I_K$, one has that
\[
\lmin(H_{t+1}(S))\ge\frac{\dstarov}{2},
\] and thus
\[r\le\frac{\dstarov}{8}\le\frac{\lmin(H_{t+1}(S))}{4}.
\]
Lemma~\ref{lem:over_logdet} with the two times exchanged therefore gives
\begin{eqnarray}
\abs{\Fov(S)-F_{t+1}^{\mathrm{ov}}(S)}&\le&\frac{4Kr}{\lmin(H_{t+1}(S))}\nonumber\\&\le&\frac{8Kr}{\dstarov}=2\etaov .\nonumber
\end{eqnarray}
Combining the two bounds with optimality of $S$ at time $t+1$,
\begin{align*}
\Fov(\Sstar)-\Fov(S)
&=\bigl[\Fov(\Sstar)-F_{t+1}^{\mathrm{ov}}(\Sstar)\bigr]\\
&\quad+\bigl[F_{t+1}^{\mathrm{ov}}(\Sstar)-F_{t+1}^{\mathrm{ov}}(S)\bigr]\\
&\quad+\bigl[F_{t+1}^{\mathrm{ov}}(S)-\Fov(S)\bigr]\\
&\le\etaov+0+2\etaov=3\etaov .
\end{align*}
If $\dmatch(\Sstar,S)>P$, then $S$ is feasible in the minimum defining $\MargOver(P)$, so
$\Fov(\Sstar)-\Fov(S)\ge\MargOver(P)>3\etaov$, a contradiction; this proves
\eqref{eq:over_implication}. Under Assumption~\ref{ass:spectral}, Lemma~\ref{lem:dk} shows that
$r=4\varepsilon/\delta$ satisfies $r\ge\rho_t$, which gives the Laplacian specialization.
\end{proof}\smallskip

\subsection{Regularized D-Optimality}
\label{subsec:regularized}

For this subsection, define
\[
C_s(S):=R_S\Proj_sR_S^\top,
\qquad s\in\{t,t+1\},
\]
for arbitrary $m$. Lemma~\ref{lem:compression} still applies because its proof does not use $m\le K$. For $\tau>0$ set
\[
F_s^\tau(S)
:=
\log\det\bigl(C_s(S)+\tau I_m\bigr).
\]
With $U_{s,S}=R_SU_s$ as in Section~\ref{subsec:setup}, Sylvester's determinant identity gives
\[\det(\tau I_m + U_{s,S}U^\top_{s, S}) = \tau^m \det(I_K + \tau^{-1}U^\top_{s,S}U_{s, S})\]
and therefore
\[
  \det\bigl(C_s(S)+\tau I_m\bigr)
  =
  \tau^{m-K}
  \det\bigl(U_{s,S}^\top U_{s,S}+\tau I_K\bigr).
\]
Thus $F_s^\tau(S)$ and the regularized $K\times K$ information log-determinant differ only by constant $(m-K)\log\tau$, so they have the same maximizers. Since $C_s(S)\succeq0$ and $\tau>0$, $C_s(S)+\tau I_m$ is positive definite for every $S$. Let $S_t^{\star,\tau}\in\argmax F_t^\tau$ be a reference maximizer and define
\[
  \MargTau(P):=\min_{\substack{\abs S=m\\ \dmatch(S,S_t^{\star,\tau})>P}}\bigl[F_t^\tau(S_t^{\star,\tau})-F_t^\tau(S)\bigr].
\]
\begin{lemma}[Uniform regularized stability]
\label{lem:reg}
Let $r\ge\rho_t$ with $r/\tau\le1/2$. Then for every $m$-set $S$,
\begin{equation}
  \abs{F_t^\tau(S)-F_{t+1}^\tau(S)}\le\frac{2mr}{\tau}=:\eta_t^\tau .
  \label{eq:reg_bound}
\end{equation}
\end{lemma}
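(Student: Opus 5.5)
The plan is to apply Lemma~\ref{lem:logdet} directly, once for each $m$-set $S$. We take $X:=C_t(S)+\tau I_m$ and $Y:=C_{t+1}(S)+\tau I_m$. The key point is that regularization gives a conditioning lower bound uniformly over all configurations. Since $C_t(S)=R_S\Proj_tR_S^\top\succeq0$, Weyl's inequality (or simply $X\succeq\tau I_m$) gives $c:=\lmin(X)\ge\tau>0$. So $X\succ0$ without invoking Proposition~\ref{prop:auto} or any determinant argument, and this works for arbitrary $m$, including $m>K$.

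Next we control the perturbation. The $\tau I_m$ terms cancel, so $Y-X=C_{t+1}(S)-C_t(S)$. Lemma~\ref{lem:compression}, whose proof does not use $m\le K$, as noted above, then gives $\norm{Y-X}_2\le\rho_t\le r$. The hypothesis $r/\tau\le1/2$ yields $r\le\tau/2\le c/2$, which is exactly the smallness condition~\eqref{eq:logdet_smallness}.

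Lemma~\ref{lem:logdet} therefore applies and gives
\[
\abs{F_{t+1}^\tau(S)-F_t^\tau(S)}\le\frac{2mr}{c}\le\frac{2mr}{\tau}=\eta_t^\tau,
\]
using $c\ge\tau$ in the last step. Since $S$ was arbitrary, the bound holds uniformly.

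There is no real obstacle here. The only point needing care is that Lemma~\ref{lem:logdet} is stated with $c=\lmin(X)$ exactly, not a lower bound on it. We handle this by checking the hypothesis with the true $c$ (valid because $r\le\tau/2\le c/2$) and then weakening $2mr/c$ to $2mr/\tau$ afterward.
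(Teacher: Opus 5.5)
Your proposal is correct and follows the paper's proof essentially verbatim: you apply Lemma~\ref{lem:logdet} with $X=C_t(S)+\tau I_m$ and $Y=C_{t+1}(S)+\tau I_m$, use $\lmin(X)\ge\tau$ together with Lemma~\ref{lem:compression} to verify the smallness condition, and then weaken $2mr/c$ to $2mr/\tau$. Your explicit remark that the lemma is invoked with the true $c=\lmin(X)$ and only weakened afterward is a small clarification of a step the paper leaves implicit.
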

\begin{proof}Apply Lemma~\ref{lem:logdet} with $X=C_t(S)+\tau I_m$ and $Y=C_{t+1}(S)+\tau I_m$. Then $\lmin(X)\ge\tau$ because $C_t(S)\succeq0$, and $\norm{Y-X}_2=\norm{C_t(S)-C_{t+1}(S)}_2\le\rho_t\le r\le\tau/2\le\lmin(X)/2$ by Lemma~\ref{lem:compression}. The bound $2mr/\tau$ follows.\end{proof}
\begin{theorem}[Movement bound for the regularized objective]
\label{thm:reg}
Assume Assumption~\ref{ass:movement}.
Let $r\ge\rho_t$ with $r/\tau\le1/2$. Then for every integer $P\ge0$, and for every $S\in\argmax F_{t+1}^\tau$, we have
\begin{equation}
\begin{gathered}
  \MargTau(P)>\frac{4mr}{\tau}
  \Longrightarrow\quad \dmatch\bigl(S_t^{\star,\tau},S\bigr)\le P.
\end{gathered}
\label{eq:reg_implication}
\end{equation}
Under Assumption~\ref{ass:spectral} it suffices that $4\varepsilon/(\tau\delta)\le1/2$ and $\MargTau(P)>16m\varepsilon/(\tau\delta)$.
\end{theorem}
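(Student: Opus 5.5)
The argument follows the same three-term telescoping used for Theorem~\ref{thm:main} and Theorem~\ref{thm:over}. It is simpler here because Lemma~\ref{lem:reg} gives a \emph{uniform} bound over all $m$-sets. No conditioning step at the new maximizer is needed: regularization supplies $\lmin\ge\tau$ for every configuration. Write $S^\star:=S_t^{\star,\tau}$ and let $S\in\argmax F_{t+1}^\tau$ be arbitrary.

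First, I will record that every $F_s^\tau(S)$ is finite, since $C_s(S)+\tau I_m\succ0$. Hence $\argmax F_{t+1}^\tau$ is a nonempty finite family, and no nondegeneracy lemma is required. Next, under $r\ge\rho_t$ and $r/\tau\le 1/2$, Lemma~\ref{lem:reg} applies to both $S^\star$ and $S$ with the same $\eta_t^\tau=2mr/\tau$. The decomposition is
\[
F_t^\tau(S^\star)-F_t^\tau(S)=\bigl[F_t^\tau(S^\star)-F_{t+1}^\tau(S^\star)\bigr]+\bigl[F_{t+1}^\tau(S^\star)-F_{t+1}^\tau(S)\bigr]+\bigl[F_{t+1}^\tau(S)-F_t^\tau(S)\bigr].
\]
The first and third brackets are each at most $\eta_t^\tau$ by Lemma~\ref{lem:reg}. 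The middle bracket is at most $0$ by optimality of $S$ at time $t+1$. Together these give the objective-loss bound $F_t^\tau(S^\star)-F_t^\tau(S)\le 4mr/\tau$.

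The movement claim then follows by contradiction, exactly as in Theorem~\ref{thm:main}(b). Suppose $\dmatch(S^\star,S)>P$. Then $S$ is feasible in the minimum defining $\MargTau(P)$, so $F_t^\tau(S^\star)-F_t^\tau(S)\ge\MargTau(P)>4mr/\tau$, which contradicts the loss bound. The empty-family convention $\MargTau(P)=+\infty$ is consistent with this, because in that case no $S$ can lie farther than $P$ from $S^\star$.

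For the Laplacian specialization, I will invoke Lemma~\ref{lem:dk} under Assumption~\ref{ass:spectral} and take $r=4\varepsilon/\delta\ge\rho_t$. The hypothesis $r/\tau\le1/2$ then becomes $4\varepsilon/(\tau\delta)\le 1/2$, and the threshold $4mr/\tau$ becomes $16m\varepsilon/(\tau\delta)$. I do not expect a real obstacle here. The only point needing care is that Lemma~\ref{lem:reg} is used in both directions, and it already covers both since its bound is an absolute value.
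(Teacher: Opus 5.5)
Your proposal is correct and matches the paper's proof: the same three-term telescoping, with Lemma~\ref{lem:reg} applied at both $S_t^{\star,\tau}$ and $S$, and the middle term bounded by optimality at time $t+1$, giving the loss bound $4mr/\tau$. The movement claim then follows by contradiction with $\MargTau(P)$, and the Laplacian case follows from Lemma~\ref{lem:dk} with $r=4\varepsilon/\delta$.
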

\begin{proof}Let $S\in\argmax F_{t+1}^\tau$. By Lemma~\ref{lem:reg} applied at $S_t^{\star,\tau}$ and at $S$, and by optimality of $S$ at time $t+1$,
\begin{align*}
  F_t^\tau(S_t^{\star,\tau})-F_t^\tau(S)
  &=\bigl[F_t^\tau(S_t^{\star,\tau})-F_{t+1}^\tau(S_t^{\star,\tau})\bigr]\\
  &\quad+\bigl[F_{t+1}^\tau(S_t^{\star,\tau})-F_{t+1}^\tau(S)\bigr]\\
  &\quad+\bigl[F_{t+1}^\tau(S)-F_t^\tau(S)\bigr]\\
  &\le\eta_t^\tau+0+\eta_t^\tau=2\eta_t^\tau=\frac{4mr}{\tau}.
\end{align*}
If $\dmatch(S_t^{\star,\tau},S)>P$ the margin definition forces the opposite strict inequality. The Laplacian statement follows from Lemma~\ref{lem:dk} with $r=4\varepsilon/\delta$.\end{proof}

Note that the threshold here is $2\eta_t^\tau$: Lemma~\ref{lem:reg} is uniform over configurations, so no separate conditioning argument at the new maximizer is needed.

\subsection{Dictionaries}
\label{subsec:dictionary}

The projector framework also covers signal models given by a learned dictionary. Let
$\Psi_t,\Psi_{t+1}\in\R^{n\times K}$ have full column rank; in the setting of
\cite{nomura2024dynamic}, $\Psi_s$ is the learned dictionary after application of a graph filter
(see below). For $s\in\{t,t+1\}$ set
\begin{eqnarray}
Q_s&:=&\Psi_s(\Psi_s^\top\Psi_s)^{-1}\Psi_s^\top=U_sU_s^\top,
\nonumber\\
U_s&:=&\Psi_s(\Psi_s^\top\Psi_s)^{-1/2}.\nonumber
\end{eqnarray}
Thus $Q_s$ is the orthogonal projector onto the column span of $\Psi_s$, and $U_s$ is the
orthonormalized dictionary. The results of Section~\ref{sec:main} apply verbatim with
$\Proj_s=Q_s$ and $\rho_t=\norm{Q_t-Q_{t+1}}_2$, which can be computed directly from the two
dictionaries; no eigengap or Davis--Kahan estimate is needed.

We shall show here precisely which criterion this covers: for $m\le K$ the objective is
$F_s(S)=\log\det(Q_s)_{SS}$, the row-volume criterion for the orthonormalized dictionary $U_s$,
and in general this differs from row-volume selection on $\Psi_s$ itself. For example, with
$n=3$, $K=2$ and dictionary rows $(10,0)$, $(0,1)$, $(0,0)$, raw row volumes for $m=1$ single
out the first row, whereas the diagonal of $Q_s$ is $(1,1,0)$ and the first two rows tie. The
two criteria agree up to a constant when $m=K$, since then
$\det(Q_s)_{SS}=\det(\Psi_{s,S})^2/\det(\Psi_s^\top\Psi_s)$ and the denominator does not depend
on $S$; for $m>K$ the information-matrix objective has the same invariance, since
$\det(U_s^\top D_SU_s)=\det(\Psi_s^\top D_S\Psi_s)/\det(\Psi_s^\top\Psi_s)$.

Recall that in \cite{nomura2024dynamic}   a dictionary $A_t$ is
learned online, a graph low-pass filter $G$ models sensor coverage, and sensors are chosen by
maximizing $\det\bigl(R_SGA_tA_t^\top GR_S^\top\bigr)$, that is, the raw row volume of
$\Psi_t:=GA_t$ \cite[eqs.~(10), (24)--(25)]{nomura2024dynamic}. When $GA_t$ has full column
rank, our results apply with $Q_t$ the projector onto its column span. The two criteria coincide
up to an $S$-independent factor when the number of sensors equals the dictionary rank, but they
can select different sets when there are fewer sensors than atoms, as in the synthetic
experiments of \cite{nomura2024dynamic}, by the example above. Moreover, the maximization there
is approximate: a greedy Schur-complement step is solved independently for each sensor over its
strict Voronoi cell intersected with a $P$-hop neighbourhood
\cite[eq.~(29) and Algorithm~2]{nomura2024dynamic}. Our results therefore certify the movement of
exact maximizers of the projector criterion associated with the same learned subspace; they do
not certify the placements produced by that algorithm.

\section{Experiments on Synthetic Graphs}
\label{sec:experiments}
The experiments illustrate the certificate rather than benchmark it. They address two
questions: how conservative the certified radius is compared with the movement that actually
occurs (Section~\ref{subsec:exp_certificates}), and how the distance margin varies with the
radius in a representative instance (Section~\ref{subsec:exp_landscape}).
\subsection{Protocol and Setup}
\label{subsec:protocol}

The following synthetic experiments use connected weighted graphs with a fixed edge set. The corresponding unweighted graph is also the movement graph, so perturbations change only the edge weights, not the distances used by $\dmatch$. Together, three graph families cover different geometrical settings (regular lattice, irregular spatial graph, and one with community structure). We use $K=4$ and $m=2$ to stay in the undersampled setting of Theorem~\ref{thm:main} while keeping the number of configurations small enough to compute all directly.

We use two baseline graphs from each of three families (with random but fixed seeds): $4\times4$ grids ($n=16$; seeds $500$ and $537$), random geometric graphs ($n=12$, connection radius $0.46$; seeds $509$ and $546$), and two-block stochastic block graphs ($n=12$, block sizes $6$ and $6$, intra-block connection probability $0.50$, inter-block connection probability $0.08$; seeds $503$ and $540$). If a random geometric graph is disconnected, we connect its components using the shortest Euclidean edges between components. If a stochastic block graph is disconnected, we order its connected components by their smallest vertex label, connect the smallest-labeled vertices of the first two components, and repeat until the graph is connected. Baseline edge weights are drawn independently from $\mathrm{Unif}[0.7,1.3]$. For each baseline, the listed seed is used both by the random graph generator, when the topology is random, and to initialize the NumPy generator used for the baseline edge weights; for the deterministic grid topology it therefore controls only the edge weights.

For each edge with weight $w$, we replace $w$ by $(1+\alpha)w$ for
\[
\alpha\in\{\pm0.02,\pm0.05,\pm0.10,\pm0.20\}.
\]
while keeping all other edge weights fixed. These values test small and moderate single edge changes while keeping all weights positive and the edge set fixed.

Each perturbation case modifies exactly one edge of a baseline graph; changes are not
accumulated, so every case compares a baseline Laplacian $L_t$ with a single perturbed Laplacian
$L_{t+1}$. For each Laplacian we form the rank-$K$ projector onto its first $K$ eigenvectors and
evaluate the objective on all $\binom nm$ configurations. Graph distances and bottleneck matching
radii are exact integers; eigenvectors, log-determinants, and derived spectral quantities are
computed in double precision. A compressed matrix is treated as positive definite when its
computed minimum eigenvalue exceeds $10^{-11}$.

Since exact ties cannot be detected in floating point, we work with numerical maximizer
families. For $s\in\{t,t+1\}$ let $F_s^{\max}:=\max_{\abs S=m}F_s(S)$ over all $m$-subsets, and
\begin{eqnarray}
\widehat{\mathcal S}_s&:=&\bigl\{S:\abs S=m,\ F_s(S)\ge F_s^{\max}-\tau_s\bigr\},
\nonumber\\
\tau_s&:=&10^{-9}+10^{-9}\max\{1,\abs{F_s^{\max}}\}.\nonumber
\end{eqnarray}
The reference $\Sstar$ is the lexicographically smallest element of $\widehat{\mathcal S}_t$.
% CHECK: confirm the reference rule used in the code
Objective gaps of magnitude below $10^{-10}$ are set to zero when computing $\Marg(P)$.

We take $r=\rho_t$, computed from the two projectors, and call a case eligible when both smallness conditions of Theorem~\ref{thm:main} hold up to an absolute tolerance of $10^{-12}$. For eligible cases define
\begin{eqnarray}
P_{\mathrm{cert}}&:=&\min\{P:\Marg(P)>B_t\},
\nonumber\\
P_{\mathrm{act}}&:=&\max_{S\in\widehat{\mathcal S}_{t+1}}\dmatch(\Sstar,S),\nonumber\\
P_{\max}&:=&\max_{\abs S=m}\dmatch(\Sstar,S).
\end{eqnarray}
The numerical margins are comfortably separated from the stated tolerances. Across the eligible cases, the minimum computed values of $\lambda_{\min}(C_t(S_t^\star))$ and of $\lambda_{\min}(C_{t+1}(S))$ over the retained new maximizers are $3.555\times10^{-1}$ and $3.499\times10^{-1}$, respectively. The minimum slacks in the two smallness conditions are $1.178\times10^{-1}$ and $1.316\times10^{-3}$; the minimum finite crossing and preceding slacks are $2.109\times10^{-3}$ and $3.248\times10^{-3}$. The minimum slack between $B_t$ and the largest
old-objective loss among retained new maximizers
is $1.826\times10^{-3}$. In particular, no case is classified as eligible solely because of the $10^{-12}$ comparison tolerance. 

For exact arithmetic and exact maximizers,
Theorem~\ref{thm:main} gives $P_{\mathrm{act}}\le P_{\mathrm{cert}}$. Members of
$\widehat{\mathcal S}_{t+1}$ are maximizers only up to $\tau_{t+1}$, so the comparisons reported
below are numerical observations rather than rigorous floating-point certificates. The radius
$P_{\max}$ contains every configuration geometrically; when the smallness conditions hold it
gives a vacuous certificate, and we call a certificate nontrivial when
$P_{\mathrm{cert}}<P_{\max}$.

All random seeds are fixed. Calculations use Python, NumPy, SciPy, pandas, and NetworkX, and figures are generated with Matplotlib. Reproducibility code and the CSV files used for the figures are available in the GitHub repository \url{https://github.com/sdg-bella/movement-stability-of-sensors-under-perturbations}. The scripts, CSV outputs, and figures used for the reported
results are archived in release
\href{https://github.com/sdg-bella/movement-stability-of-sensors-under-perturbations/releases/tag/arxiv-v1}
{\texttt{arxiv-v1}},
corresponding to commit
\href{https://github.com/sdg-bella/movement-stability-of-sensors-under-perturbations/tree/b59f808048a7c645e6ea7c989d26f02b906f3940}
{\texttt{b59f808}}.

\begin{figure}[h!]
\centering
\includegraphics[width=\linewidth]{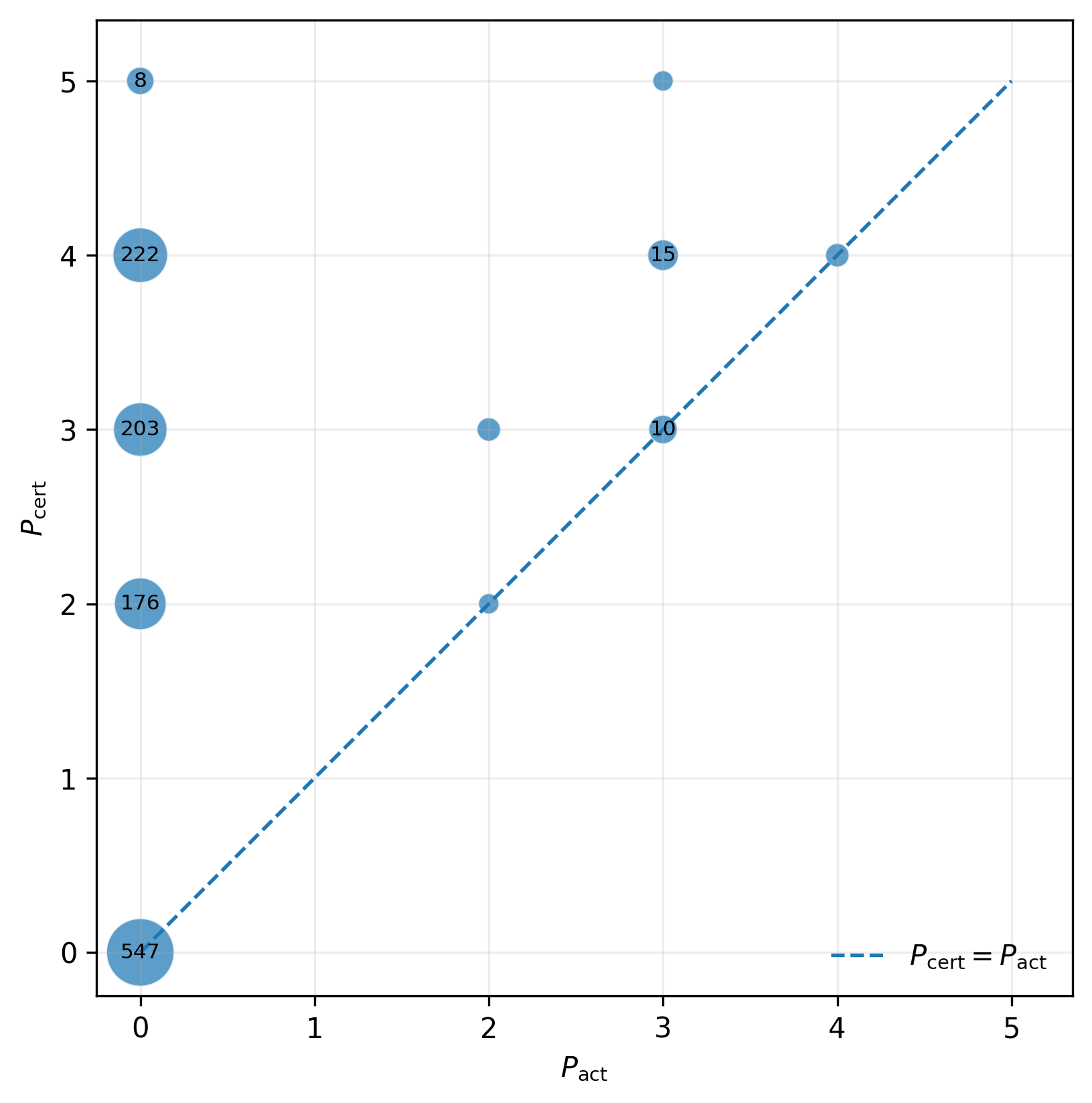}
\caption{Actual matching movement $P_{\mathrm{act}}$ versus certified radius
$P_{\mathrm{cert}}$ over the 1,189 eligible perturbation cases. Each bubble marks an integer pair
$(P_{\mathrm{act}},P_{\mathrm{cert}})$; its area is proportional to the number of cases with that
pair, and the printed number is that count. The dashed diagonal is
$P_{\mathrm{cert}}=P_{\mathrm{act}}$: points on it are sharp certificates, and no point lies
below it.}
\label{fig:pact_pcert}
\end{figure}

\subsection{Certifying Optimizer Movement on Synthetic Graphs}
\label{subsec:exp_certificates}

\begin{figure*}[t!]
\centering
\includegraphics[width=0.95\linewidth]{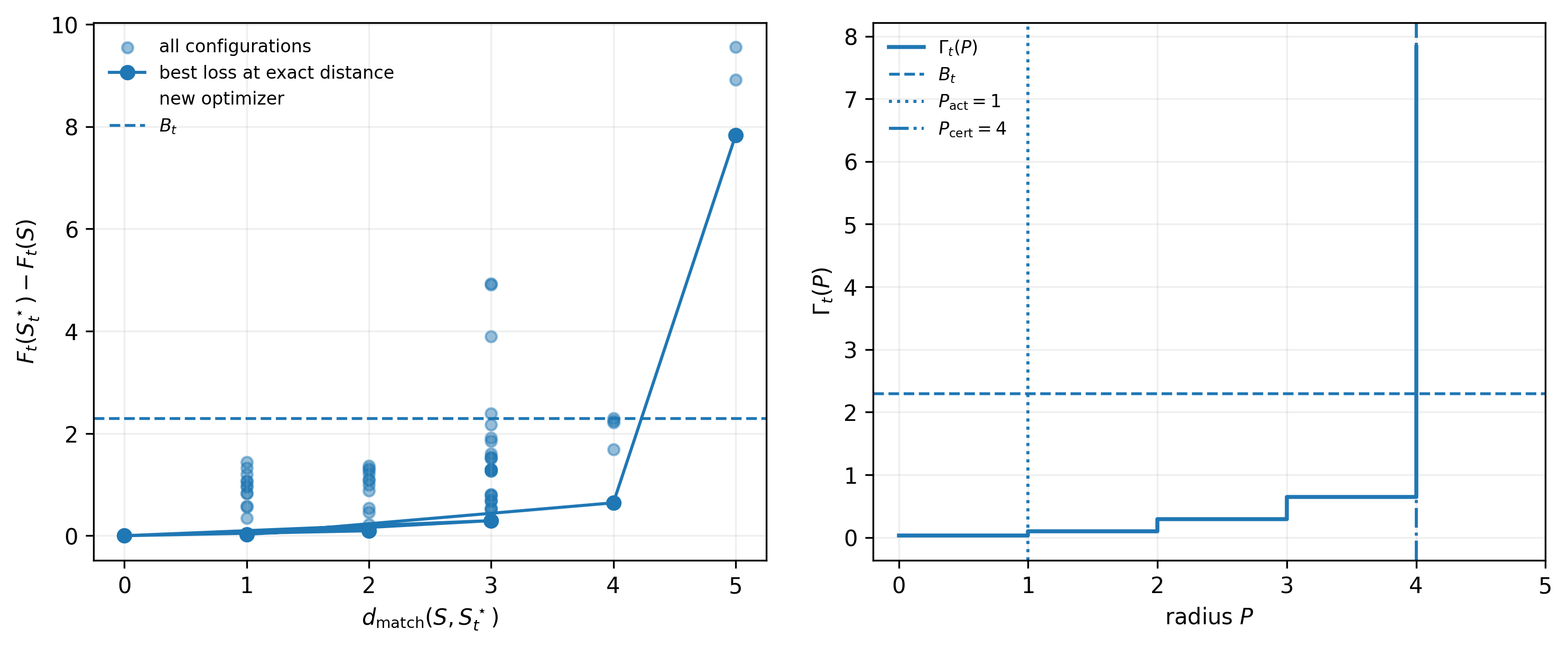}
\caption{Old-objective landscape for the illustrative random geometric instance (left) and the
induced distance-margin curve (right). The horizontal line is $B_t$; vertical lines indicate
$P_{\mathrm{act}}$ and $P_{\mathrm{cert}}$. This is separate from the sweep summarized in Figure~\ref{fig:pact_pcert}.
}
\label{fig:landscape_margin}
\end{figure*}
The methodology produces 1,224 one-edge perturbation cases. Of these, 1,189 (97.1\%) satisfy both projector perturbation smallness conditions with $r=\rho_t$ and are eligible for certification. In every eligible case, the computation gives $P_{\mathrm{act}}\le P_{\mathrm{cert}}$, consistent with the exact-arithmetic conclusion of Theorem~\ref{thm:main}.
A comparison between  the observed movement
$P_{\mathrm{act}}$ and the certified radius
$P_{\mathrm{cert}}$ across all eligible perturbation cases is given in Figure~\ref{fig:pact_pcert}.
Points on the diagonal correspond to equality of the two
radii, while points above it show how conservative the
certificate is; bubble sizes indicate the number of cases
with each pair of radii.

Among the eligible cases, 1,035 (87.0\%) have a nontrivial certificate $P_{\mathrm{cert}}<P_{\max}$. In 561 cases (47.2\%), $P_{\mathrm{cert}}=P_{\mathrm{act}}$ with integer radius, with 547 of these being $P_{\mathrm{cert}}=P_{\mathrm{act}}=0$. The optimizer moves in 33 eligible cases, and 14 of these are integer-radius sharp. The median difference $P_{\mathrm{cert}}-P_{\mathrm{act}}$
is two hops over all eligible cases and one hop among
the $33$ cases with positive movement.
The two-hop median gap also shows that it can still be conservative. It often rules out large movements without giving the smallest possible radius. The six baselines are not matched across graph families by projector drift or distance margin, so these frequencies should not be used to rank the graph families.

\subsection{Distance Margin Curve}
\label{subsec:exp_landscape}

We show the distance margin using a separate illustrative random geometric graph with $n=12$, $K=4$, and $m=2$ (seed $911$), selected by the fixed rule in the reproducibility code; this instance is not part of the six-baseline sweep in Section~\ref{subsec:exp_certificates}. The selected perturbation increases the weight of edge $(3,5)$ by $10\%$, using the zero-based vertex labels of the code. For this perturbation,
\[
P_{\mathrm{act}}=1,\qquad
P_{\mathrm{cert}}=4,\qquad
P_{\max}=5,
\]
with $\rho_t=0.05193$ and $B_t=2.30128$; this can be seen in Figure~\ref{fig:landscape_margin}, which plots $\dmatch(S,\Sstar)$ versus $F_t(\Sstar)-F_t(S)$ for every configuration $S$. The left plot therefore shows the objective loss of every configuration. For each integer radius $P$, $\Marg(P)$ is the smallest objective loss among configurations with $\dmatch(S,\Sstar)>P$.

The right side shows how $\Marg(P)$ changes with $P$, and the first integer radius satisfying $\Marg(P)>B_t$ is $P_{\mathrm{cert}}$. It is important to highlight that the left panel shows what a single second best gap misses. Configurations with nearly the same objective loss can lie at very different matching distances. The distance margin keeps this distance information, which is what makes it useful for certifying optimizer movement.

\section{Conclusions}
\label{sec:conclusion}
We developed a geometric stability framework for dynamic
determinant-based subset selection, measuring sensor movement
by bottleneck matching on a fixed mobility graph. The main
findings are:
\begin{itemize}
\item Small projector perturbations bound the movement of
every new maximizer when the distance margin exceeds the
objective perturbation budget.

\item Perturbation size and eigengap alone cannot guarantee
small movement, even on a positively weighted path.

\item Distance margins certify when local restrictions
retain the global optimum, with an additional compatibility
condition for Voronoi restrictions.

\item The framework extends to oversampled and regularized
objectives and to learned-subspace projectors; under suitable
ensemble assumptions, the second-best gap controls the
probability of optimizer change.
\end{itemize}

The numerical illustrations show that the certificate,
evaluated exhaustively in double precision, is nontrivial
in most of the tested cases, although its radius can be
conservative. Positive movement occurs in $33$ of the
$1{,}189$ numerically eligible cases. These observations
motivate studying the lower tail of the second-best
objective gap in concrete dynamic graph models, which
Corollary~\ref{cor:average_case} relates to the probability
of optimizer change under its stated assumptions.

The framework may be useful for mobile sensing on networks,
where an updated signal model can require sensors to move
between candidate locations. Examples include environmental
monitoring and robotic sensing, provided that movement can
be represented by a fixed mobility graph and placement is
governed by one of the determinant criteria studied here.
The certificate could help determine when a local
reconfiguration retains a global optimum. For larger
networks, an important next step is to obtain conservative
lower bounds on the distance margin without enumerating
all configurations, making the certificate useful beyond
the small instances considered here.

Further extensions concern the sensors and their update
rules. The present matching distance treats sensors as
interchangeable and does not account for interactions
between their trajectories. Incorporating heterogeneous
capabilities, travel costs, and collision constraints would
require adapting the configuration space and its movement
geometry. Another direction is to extend the stability
results to approximate maximizers, connecting the present
guarantees for exact optima to configurations produced by
practical placement algorithms.

\section*{Declarations}
\noindent\textbf{Acknowledgements.} 
 LPS is grateful to Magdalen College and the Mathematical Institute at the University of
Oxford for their hospitality and support during the Visiting Fellowship in the course of
which this work was completed. 
 
\smallbreak
\noindent\textbf{Competing interests.} The authors have no relevant financial or non-financial interests to disclose.
\smallbreak
\noindent\textbf{Data and Code availability.} All data reported in this paper are generated synthetically by the procedures described in Section~\ref{sec:experiments}.   Our code and CSV files  are available at \url{https://github.com/sdg-bella/movement-stability-of-sensors-under-perturbations}.
\smallbreak
\noindent\textbf{Author contributions.}
Conceptualization: Isabella Yin, Laura P. Schaposnik; Methodology and formal analysis: Isabella Yin; Software and numerical investigation: Isabella Yin; Writing: Isabella Yin,  Laura P. Schaposnik; Supervision: Laura P. Schaposnik. Both authors read and approved the final manuscript.
\smallbreak
\noindent\textbf{AI-assisted technologies.} ChatGPT and Claude   were used during manuscript preparation to assist with language refinement. All AI-assisted output was reviewed and edited by the authors, taking responsibility for the final manuscript.

\smallbreak
\noindent\textbf{Funding.} The research of LPS is partially supported by NSF FRG Award DMS2152107 and NSF CAREER
Award DMS 1749013, and also supported in part by grants from the NSF (DMS2235451) and
Simons Foundation (MP-TMPS-00005320) to the NSF-Simons National Institute for Theory and
Mathematics in Biology (NITMB), and by a Simons Travel Award.

\pagebreak
\appendix \section{Notation}
\label{app:notation}
We collect the main symbols used in the paper.
\begin{table}[H]
\caption{Main notation (I): setting and objectives.}
\smallbreak
\label{tab:notation}
\centering
\small
\begin{tabular}{@{}p{1.8cm}p{5.8cm}@{}}
\toprule
Symbol & Meaning \\
\midrule
$V$, $n$ & vertex set, number of vertices $n=\abs V$ \\

$K$, $m$ & projector rank and number of selected coordinates \\

$S$ & configuration, i.e.\ an $m$-subset of $V$ \\

$G_{\mathrm{mv}}$, $d$ & fixed mobility graph and its shortest-path distance \\

$\Proj_t$, $U_t$ & rank-$K$ orthogonal projector at time $t$ and an orthonormal basis of its range \\

$\rho_t$ & projector drift $\norm{\Proj_t-\Proj_{t+1}}_2$ \\

$R_S$, $D_S$ & row-selector matrix and associated diagonal selector $D_S=R_S^\top R_S$ \\

$C_t(S)$ & compressed projector $(\Proj_t)_{SS}$, used in the undersampled case \\

$H_t(S)$ & information matrix $U_t^\top D_SU_t$, used in the oversampled case \\

$F_t$, $F_t^{\mathrm{ov}}$ & undersampled and oversampled log-determinant objectives \\
\bottomrule
\end{tabular}
\end{table}
\begin{table}[H]
\caption{Main notation (II): movement, margins, and certification.}
\smallbreak
\label{tab:notation2}
\centering
\small
\begin{tabular}{@{}p{1.4cm}p{6cm}@{}}
\toprule
Symbol & Meaning \\
\midrule
$\Sstar$ & fixed reference maximizer in $\argmax F_t$ \\

$\dstar$, $\lstar$ & determinant and minimum eigenvalue of $C_t(\Sstar)$ \\

$P$ & nonnegative integer movement radius, measured in graph hops \\

$\dmatch$ & bottleneck matching distance between configurations \\

$\Marg(P)$ & distance margin from configurations more than $P$ hops from $\Sstar$ \\

$r$ & certified upper bound on projector drift, $r\ge\rho_t$ \\

$\eta_{t,1}$, $\eta_{t,2}$, $B_t$ & perturbation terms appearing in Theorem~\ref{thm:main}, with $B_t=\eta_{t,1}+\eta_{t,2}$ \\

$\varepsilon$, $\delta$ &
Laplacian perturbation size $\norm{L_t-L_{t+1}}_2$ and baseline eigengap
$\lambda_{K+1}(L_t)-\lambda_K(L_t)$ \\

$P_{\rm act}$ & largest matching distance from $S_t^\star$ to a
retained numerical maximizer in $\widehat{\mathcal S}_{t+1}$\\

$P_{\rm cert}$ & smallest movement radius certified by the theorem \\

$P_{\max}$ & largest matching distance from $\Sstar$ to any $m$-subset \\
\bottomrule
\end{tabular}
\end{table}

\begin{table}[H]
\caption{Main notation (III): extensions and local restrictions.}
\smallbreak
\label{tab:notation3}
\centering
\small
\begin{tabular}{@{}p{0.24\columnwidth}p{0.72\columnwidth}@{}}
\toprule
Symbol & Meaning \\
\midrule
$\dstarov$, $\etaov$ & determinant of $H_t(\Sstar)$ for a maximizer $\Sstar$ of $F_t^{\mathrm{ov}}$, and the oversampled perturbation term \\
$\MargOver(P)$ & oversampled distance margin \\
$\tau$ & regularization parameter \\
$S_t^{\star,\tau}$, $\MargTau(P)$, $\eta_t^\tau$ & regularized reference maximizer, distance margin, and perturbation term \\
$\Psi_s$, $Q_s$ & (filtered) learned dictionary and the projector onto its span \\
$W_i^{<}$ & strict Voronoi cell of $q_i$ in $G_{\mathrm{mv}}$ \\
$V_P$ & Voronoi-restricted local feasible family \\
\bottomrule
\end{tabular}
\end{table}

\newpage
\small
\bibliographystyle{plain}
\bibliography{references}

\end{document}